\newcommand{\N} {\mathbb{N}}
\newcommand{\Z} {\mathbb{Z}}
\newcommand{\R} {\mathbb{R}}
\newcommand{\C} {\mathbb{C}}
\newcommand{\g}{\mathfrak{g}}
\newcommand{\QL}{\mathcal{Q}}
\newcommand{\Op}{\mathrm{Op}}
\newcommand{\supp}{\mathrm{supp}}
\renewcommand{\geq}{\geqslant}
\renewcommand{\leq}{\leqslant}
\newtheorem{theorem}{Theorem}  
\newtheorem{proposition}{Proposition}
\newtheorem{definition}{Definition}
\newtheorem{lemma}{Lemma}
\theoremstyle{definition}\newtheorem{remark}{Remark}
\title{Quantum Limits on product manifolds}
\author{Emmanuel Humbert\footnote{Institut Denis Poisson, UFR Sciences et Technologie, Facult\'e Fran\c{c}ois Rabelais, Parc de Grandmont, 37200 Tours, France (\texttt{emmanuel.humbert@lmpt.univ-tours.fr}).}
\and Yannick Privat\footnote{IRMA, Universit\'e de Strasbourg, CNRS UMR 7501, 7 rue Ren\'e Descartes, 67084 Strasbourg, France (\texttt{yannick.privat@unistra.fr}).}
	\and Emmanuel Tr\'elat\footnote{Sorbonne Universit\'e, CNRS, Universit\'e de Paris, Inria, Laboratoire Jacques-Louis Lions (LJLL), F-75005 Paris, France (\texttt{emmanuel.trelat@sorbonne-universite.fr}).}}
\date{}
\begin{document}

\maketitle

\begin{abstract}
We establish some properties of quantum limits on a product manifold, proving for instance that, under appropriate assumptions, the quantum limits on the product of manifolds are absolutely continuous if the quantum limits on each manifolds are absolutely continuous. On a product of Riemannian manifolds satisfying the minimal multiplicity property, we prove that a periodic geodesic can never be charged by a quantum limit.
\end{abstract}

\section{Introduction and main results}
\subsection{Setting}\label{sec_setting}
We first recall the definition of a \textit{Quantum Limit} (QL in short). Let $M$ be a smooth compact manifold, endowed with a probability measure $\rho$, and let $\triangle$ be a self-adjoint operator on $L^2(M,\rho)$ (the space of square $\rho$-integrable complex-valued functions on $M$), bounded below and having a compact resolvent and thus a discrete spectrum $\mathrm{Spec}(\triangle)$. 

A \emph{local QL} (or \emph{QL on the base}) of $\triangle$ is a probability Radon measure on $M$ that is a closure point (weak limit) of the family of probability measures $\mu_{\phi_\lambda} = \vert\phi_\lambda\vert^2\, \rho$ (for $\lambda\in\mathrm{Spec}(\triangle)$) as $\lambda\rightarrow+\infty$, where $\triangle\phi_\lambda=\lambda\phi_\lambda$ and $\Vert\phi_\lambda\Vert_{L^2(M,\rho)}=1$. The set $\QL(M)$ of local QLs of $\triangle$ is compact. Actually, to define local QLs, it suffices that $M$ be a topological compact set.

Given any quantization $\Op$, a \emph{microlocal QL} of $\triangle$ is a probability Radon measure on the co-sphere bundle $S^*M$ that is a closure point of the family of Radon measures $\tilde\mu_{\phi_\lambda}$ (for $\lambda\in\mathrm{Spec}(\triangle)$) as $\lambda\rightarrow+\infty$, where $\tilde\mu_{\phi_\lambda}(a) = \langle\Op(a)\phi_\lambda,\phi_\lambda\rangle$ for every classical symbol $a$ of order $0$ and $\phi_\lambda$ is an eigenfunction of $\triangle$ of norm $1$ corresponding to the eigenvalue $\lambda$.
The set $\QL(S^*M)$ of microlocal QLs of $\triangle$ is compact. 

Every local QL is the image of a (not necessarily unique) microlocal QL under the canonical projection $\pi:S^*M\rightarrow M$, i.e., we have $\pi_*\QL(S^*M)=\QL(M)$.
Indeed, given any $f\in C^0(M)$ and any eigenfunction $\phi$ of $\triangle$, we have
$$
(\pi_*\tilde\mu_\phi)(f) = \tilde\mu_\phi(\pi^*f) = \langle\Op(\pi^*f)\phi,\phi\rangle = \int_M f \vert\phi\vert^2\, d\rho 
$$
because $\Op(\pi^*f)\phi=f\phi$. The equality then easily follows by weak compactness of probability Radon measures.

Here and throughout, the co-sphere bundle $S^*M$ is defined as the sphere bundle $ST^*M$ of the cotangent bundle, that is, the quotient of $T^*M\setminus\{0\}$ under positive homotheties. Homogeneous functions of order $0$ on $T^*M\setminus\{0\}$ are identified with functions on $S^*M$.

\medskip

Our objective is to give results for QLs of \emph{product} manifolds.

For $\ell=1,2$, let $M_\ell$ be a smooth compact manifold endowed with a Borel probability measure $\rho_\ell$, and let $\triangle_\ell$ be a self-adjoint operator on $L^2(M_\ell,\rho_\ell)$, bounded below and having a compact resolvent and thus a discrete spectrum $\lambda^\ell_1 < \lambda^\ell_2 < \cdots < \lambda^\ell_i < \cdots$ 
where each $\lambda^\ell_i$ is of multiplicity $m^\ell_i\in\N^*$ and $\lambda^\ell_i\rightarrow+\infty$ as $i\rightarrow+\infty$. 
We consider the product manifold $M=M_1\times M_2$, endowed with the product probability measure $\rho=\rho_1\otimes \rho_2$ and the self-adjoint operator $\triangle$ on $L^2(M,\rho)$ defined by
$$
\triangle = (\triangle_1)_{x_1}+(\triangle_2)_{x_2} = \triangle_1\otimes\mathrm{id}_{M_2} + \mathrm{id}_{M_1}\otimes\triangle_2 .
$$
The eigenvalues of $\triangle$ (not ordered in a nondecreasing way) are given by $\lambda_{i,j} = \lambda^1_i+\lambda^2_j$, for $i,j\in\N^*$. Given any pair $(i,j)\in(\N^*)^2$, we denote by $m_{i,j}$ the multiplicity of the eigenvalue $\lambda_{i,j}$. Since it may happen that $\lambda_{i,j} = \lambda_{i',j'} = \lambda^1_{i'}+\lambda^2_{j'}$ for another pair $(i',j')\in(\N^*)^2$, we always have
$$
m_{i,j} \geq m^1_i m^2_j .
$$
For $\ell=1,2$, given any Hilbert basis $(\phi^\ell_{i,k})_{i\in\N^*,\, k\in\{1,\ldots,m^\ell_i\}}$ of $L^2(M_\ell,\rho_\ell)$ consisting of eigenfunctions of $\triangle_\ell$, i.e.,
\begin{equation}\label{Hilbertbasis}
\triangle_\ell \phi^\ell_{i,k} = \lambda^\ell_i \phi^\ell_{i,k} \qquad \forall i\in\N^* \qquad \forall k\in\{1,\ldots,m^\ell_i\},
\end{equation}
a Hilbert basis (among others) of $L^2(M,\rho)$ of eigenfunctions of $\triangle$ is given by the functions $\phi^1_{i,k} \otimes \phi^2_{j,l}$, for $i,j\in\N^*$, $k\in\{1,\ldots,m^1_i\}$ and $l\in\{1,\ldots,m^2_j\}$.

Obviously, we have the inclusion
$$
\vert\phi^1_{i,k}\vert^2\, \rho_1\otimes\QL(M_2) \ \cup\ \QL(M_1)\otimes\vert\phi^2_{j,l}\vert^2\, \rho_2 \ \cup\ \QL(M_1)\otimes\QL(M_2) \ \subset\ \QL(M) 
$$
for all $i,j\in\N^*$, $k\in\{1,\ldots,m^1_i\}$ and $l\in\{1,\ldots,m^2_j\}$, showing that there is an infinite number of local QLs. However, QLs may have a much more complicated structure.

Note that, for any $\mu_2\in\QL(M_2)$ and any $\tilde\mu_2\in\QL(S^*M_2)$ such that $(\pi_2)_*\tilde\mu_2=\mu_2$ where $\pi_2:S^*M_2\rightarrow M_2$ is the canonical projection, for any fixed $i\in\N^*$ and $k\in\{1,\ldots,m^1_i\}$, the QL $\vert\phi^1_{i,k}\vert^2\, \rho_1\otimes\mu_2$ is the image under $\pi$ of the microlocal QL $\vert\phi^1_{i,k}\vert^2\, \rho_1\,\delta_{\xi_1=0}\otimes\tilde\mu_2$ (and symmetrically, by switching the roles of $M_1$ and $M_2$).

\subsection{Main results}\label{sec_mainresults}

We consider the following assumptions, which will be commented hereafter:
\begin{enumerate}[label=$\bf (UC)$,leftmargin=1.33cm]
\item\label{UC} (Unique continuation property) For $\ell=1,2$, any eigenfunction of $\triangle_\ell$ vanishing on a measurable subset of $M_\ell$ of positive measure is identically zero.
\end{enumerate}
\begin{enumerate}[label=$\bf (B)$,leftmargin=1.33cm]
\item\label{B} (Boundedness property) For $\ell=1,2$, any eigenfunction of $\triangle_\ell$ is bounded (not necessarily uniformly).
\end{enumerate}
\begin{enumerate}[label=$\bf (MM)$,leftmargin=1.33cm,topsep=0cm]
\item\label{MM} (Minimal multiplicity property) $m_{i,j} = m^1_i m^2_j$ for all $i,j\in\N^*$.
\end{enumerate}

\begin{theorem}\label{thm1}
\begin{enumerate}[label=(\roman*)]
\item\label{itemi} Under \ref{UC} and \ref{MM}, 
if every local QL of $M_\ell$ has full support, for $\ell=1,2$, then every local QL of $M$ has full support.\footnote{Equivalently, by compactness of QLs, $\displaystyle\inf_{\mu \in \QL(M) } \mu(\omega)>0$ for any open subset $\omega \subset M$.}
\item\label{itemii} Under \ref{B} and \ref{MM}, if every local QL of $M_\ell$ 
is absolutely continuous with respect to $\rho_\ell$, for $\ell=1,2$, then every local QL of $M$ 
is absolutely continuous with respect to $\rho$.
\end{enumerate}
\end{theorem}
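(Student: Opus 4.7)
Plan. The common backbone for both parts is the following consequence of \ref{MM}: if $\phi_n$ is an eigenfunction of $\triangle$ with eigenvalue $\lambda_n$, then $\phi_n$ lies in $E^1_{i_n}\otimes E^2_{j_n}$ for a unique pair $(i_n,j_n)$ with $\lambda^1_{i_n}+\lambda^2_{j_n}=\lambda_n$, where $E^\ell_i$ denotes the eigenspace of $\triangle_\ell$ associated with $\lambda^\ell_i$. In particular, for $\rho_2$-a.e.\ $x_2\in M_2$ the slice $\phi_n(\cdot,x_2)$ belongs to $E^1_{i_n}$, and symmetrically for $x_1$-slices. Both parts therefore reduce to a uniform quantitative statement about \emph{arbitrary} unit-norm eigenfunctions of $\triangle_\ell$, combined with Fubini on $M=M_1\times M_2$.

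For (i), the key lemma is: for every open $\omega_\ell\subset M_\ell$, there exists $c_\ell>0$ such that $\int_{\omega_\ell}|\phi|^2\,d\rho_\ell\geq c_\ell$ for every $i\geq 1$ and every $\phi\in E^\ell_i$ of unit $L^2$-norm. I would prove this by contradiction: a failing sequence $\phi_k\in E^\ell_{i_k}$ is handled, in the case of bounded $i_k$, by a finite-dimensional extraction and \ref{UC} (producing a nonzero eigenfunction vanishing on $\omega_\ell$), and in the case $i_k\to\infty$ by extracting a weak subsequential limit $\mu_\ell\in\QL(M_\ell)$ and using the full-support hypothesis together with Portmanteau's lower-semicontinuity inequality on the open set $\omega_\ell$. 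Once the lemma is established, for a product open set $\omega_1\times\omega_2\subset M$ and an eigenfunction $\phi_n$ of $\triangle$, applying the lemma on $M_2$ to the slice $\phi_n(x_1,\cdot)\in E^2_{j_n}$ (for a.e.\ $x_1$) and integrating yields
\[
\int_{\omega_1\times\omega_2}|\phi_n|^2\,d\rho\ \geq\ c_2\int_{\omega_1\times M_2}|\phi_n|^2\,d\rho\ =\ c_2\,\nu^n_1(\omega_1).
\]
The Schmidt (singular value) decomposition of $\phi_n$ then exhibits the marginal $\nu^n_1$ as a convex combination $\sum_r\tau^n_r|\alpha^n_r|^2\rho_1$ with $\alpha^n_r\in E^1_{i_n}$ orthonormal and $\sum_r\tau^n_r=1$, and applying the lemma on $M_1$ to each $\alpha^n_r$ gives $\nu^n_1(\omega_1)\geq c_1$. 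Hence $\int_{\omega_1\times\omega_2}|\phi_n|^2\,d\rho\geq c_1c_2$ uniformly in $n$, and Portmanteau yields $\mu(\omega_1\times\omega_2)\geq c_1c_2>0$ for every $\mu\in\QL(M)$; general open sets are then handled by covering with product open sets.

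For (ii), the scheme is parallel, with the uniform lower bound replaced by uniform integrability. The analogue of the key lemma to prove is that under \ref{B} and the QL-AC hypothesis the family $\mathcal{F}_\ell=\{|\phi|^2\rho_\ell:\phi\in E^\ell_i,\ \|\phi\|_{L^2}=1,\ i\geq 1\}$ is uniformly integrable with respect to $\rho_\ell$. Granting this, the same Fubini/\ref{MM} scheme transfers UI to the product family $\{|\phi_n|^2\rho\}_n$: given $E\subset M$ of small $\rho$-measure, decompose $\int_E|\phi_n|^2\,d\rho$ according to whether the section $E_{x_2}$ has $\rho_1$-measure smaller or larger than the UI threshold of $\mathcal{F}_1$; apply UI of $\mathcal{F}_1$ to the normalized slices $\phi_n(\cdot,x_2)/\|\phi_n(\cdot,x_2)\|_{L^2(M_1)}$ on the first piece, and UI of $\mathcal{F}_2$ to $\nu^n_2$ (which is again a Schmidt convex combination of $\mathcal{F}_2$-measures) on the second. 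The Dunford--Pettis theorem then gives $\mu\ll\rho$ for every $\mu\in\QL(M)$.

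The main obstacle is the UI lemma itself. The case of bounded eigenvalue index is a straightforward finite-dimensional argument using \ref{B}. In the case $i_k\to\infty$, a naive attempt that extracts a weak limit $|\phi_k|^2\rho_\ell\to f_\ell\,\rho_\ell$ with $f_\ell\in L^1$ and hopes for a direct contradiction fails, because weak convergence of probability measures to an AC limit does not in general imply uniform integrability of the densities (the classical example of tall thin bumps equidistributed over $M_\ell$ has an AC weak limit but is not UI). Ruling out such pathological concentration must exploit some additional rigidity specific to eigenfunctions of $\triangle_\ell$, and this is where \ref{B} has to be used nontrivially; I expect this to be the crux of the proof of (ii).
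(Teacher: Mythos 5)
Part \ref{itemi} of your proposal is essentially the paper's proof. Your key lemma is exactly the statement $\g_{M_\ell}(\omega_\ell)>0$ for open $\omega_\ell$, obtained from \ref{UC} and the full-support hypothesis by the same dichotomy (bounded spectral parameter versus extraction of a QL) as in Lemma \ref{lemggprime}; your slice-then-Schmidt computation is Proposition \ref{main2} specialized to product sets (the paper expands $\phi$ in a tensorized eigenbasis rather than invoking the singular value decomposition, but the orthogonality cancellation is identical). One small correction: the Portmanteau theorem gives $\mu(O)\leq\liminf\mu_n(O)$ for \emph{open} $O$, so the uniform bound $\int_{\omega_1\times\omega_2}\vert\phi_n\vert^2\,d\rho\geq c_1c_2$ does not pass to the limit on the open product set in the direction you claim; you should run the argument with closed balls $\overline{B}_1\times\overline{B}_2\subset\omega$ and use the closed-set half of Portmanteau ($\mu(F)\geq\limsup\mu_n(F)$). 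This is cosmetic.

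Part \ref{itemii} has a genuine gap, and it is exactly the one you flag: the uniform integrability of the family $\{\vert\phi\vert^2\rho_\ell\}$ over all normalized eigenfunctions is neither proved in your proposal nor needed for the theorem, and your own ``tall thin bumps'' obstruction shows why no soft compactness argument will deliver it. The paper never proves full UI. Instead it fixes the $\rho$-null set $A$ to be tested (by inner regularity one may take $A$ compact) and works only with the \emph{nested} family $\omega_\varepsilon=\{x\in M \mid d_M(x,A)<\varepsilon\}$ and $F_\varepsilon=M\setminus\omega_\varepsilon$. The substitute for UI is Lemma \ref{step1}: if $(A_\varepsilon)_{\varepsilon>0}$ is a nonincreasing family of \emph{closed} sets with $\rho_\ell(A_\varepsilon)\rightarrow 0$, then $\g_{M_\ell}(M_\ell\setminus A_\varepsilon)\rightarrow 1$. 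Nestedness is precisely what defeats the bump counterexample: if $\int_{A_{\varepsilon_k}}\vert\phi_{\varepsilon_k}\vert^2\,d\rho_\ell\geq\delta$ along a sequence, then for each fixed $\varepsilon$ one has $\int_{A_\varepsilon}\vert\phi_{\varepsilon_k}\vert^2\,d\rho_\ell\geq\delta$ for all $k$ with $\varepsilon_k\leq\varepsilon$, so the closed-set Portmanteau inequality forces a limiting QL to charge every $A_\varepsilon$ with mass at least $\delta$, hence to charge the $\rho_\ell$-null intersection $\cap_\varepsilon A_\varepsilon$ --- contradicting absolute continuity of that QL (here \ref{B} is used only to discard the bounded-eigenvalue case). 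The passage to the product is then Lemmas \ref{step2} and \ref{step3}: by Fubini the set of $x_1$ with $\g_{M_2}((F_\varepsilon)_{x_1})\leq 1-\delta$ shrinks to a $\rho_1$-null set, and feeding this into Proposition \ref{main2} gives $\g_M(F_\varepsilon)\rightarrow 1$, whence $\mu(\omega_\varepsilon)\rightarrow 0$ and $\mu(A)=0$. So the missing idea is not some extra rigidity of eigenfunctions rescuing uniform integrability, but the reduction to nested closed neighborhoods of a single null set, for which the concentration argument closes.
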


Note that Theorem \ref{thm1} is valid more generally if $M_\ell$ is a compact metric set endowed with a Borel probability measure $\rho_\ell$, for $\ell=1,2$.

\medskip

For the next result, we assume moreover (in addition to the definitions of Section \ref{sec_setting}) that $\triangle_1$ and $\triangle_2$ are differential operators having the same order $d\in\N^*$ (which must be even), of respective principal symbols $h_1$ and $h_2$. 
With a slight abuse of notation, we consider the functions $h_1$ and $h_2$ as smooth functions on the cotangent bundle $T^*M$, homogeneous of order $d$ (rigorously, we should speak of $h_1\otimes 1$ and of $1\otimes h_2$). 
Then, the smooth function $h=h_1+h_2$ is the principal symbol (of order $d$) of the differential operator $\triangle$. 
Considering if necessary $\triangle_\ell+c_\ell\,\mathrm{id}$ for some $c_\ell\geq 0$ large enough, without loss of generality, we assume that $\triangle_1$ and $\triangle_2$ are nonnegative, so that $h_1\geq 0$ and $h_2\geq 0$.

Note that,\footnote{Indeed, $h_\ell(x,\lambda\xi)=\lambda^d h_\ell(x,\xi)$ for every $\lambda>0$ implies $\frac{\partial h_\ell}{\partial\xi}(x,\xi).\xi=d\times h_\ell(x,\xi)$.}
since $h_\ell$ is homogeneous of degree $d$, denoting by $\pi_\ell:T^*M_\ell\rightarrow M_\ell$ the canonical projection, and by $\vec{h}_\ell$ the Hamiltonian vector field corresponding to the Hamiltonian $h_\ell$, we have $d\pi_\ell.\vec{h}_\ell\neq 0$, i.e., $\frac{\partial h_\ell}{\partial\xi_\ell}\neq 0$ in local coordinates, on the open subset $\{h_\ell>0\}$ of $T^*M$, for $\ell=1,2$. 
In particular, the set $\{h_\ell=1\}$ is a submanifold of $T^*M$.

We have three (nonnegative) Hamiltonians on $T^*M$ and thus three natural Hamiltonian flows on $T^*M$: 
\begin{itemize}
\item The standard Hamiltonian flow, corresponding to the Hamiltonian $h$. A Hamiltonian curve (of $h$) on $T^*M$ is, by definition, an absolutely continuous curve solution of
$$
\dot x_1 = \frac{\partial h_1}{\partial\xi_1}(x_1,\xi_1), \quad \dot x_2 = \frac{\partial h_2}{\partial\xi_2}(x_2,\xi_2), \quad \dot\xi_1 = -\frac{\partial h_1}{\partial x_1}(x_1,\xi_1), \quad \dot\xi_2 = -\frac{\partial h_2}{\partial x_2}(x_2,\xi_2),  
$$
in local coordinates, i.e., an integral curve of the Hamiltonian vector field $\vec{h}=\vec{h}_1+\vec{h}_2$.
%
\item The {\em horizontal Hamiltonian flow}, corresponding to the Hamiltonian $h_1$ viewed as a smooth function on $T^*M$.
A horizontal Hamiltonian curve on $T^*M$ is, by definition, an absolutely continuous curve solution of
$$
\dot x_1 = \frac{\partial h_1}{\partial\xi_1}(x_1,\xi_1), \quad \dot x_2 = 0, \quad \dot\xi_1 = -\frac{\partial h_1}{\partial x_1}(x_1,\xi_1), \quad \dot\xi_2 = 0,  
$$
in local coordinates, i.e., an integral curve of the Hamiltonian vector field $\vec{h}_1$.
Note that the horizontal Hamiltonian flow differs from the standard Hamiltonian flow, although they coincide when both are restricted to $h_2=0$. 
\item The {\em vertical Hamiltonian flow}, defined similarly to the horizontal Hamiltonian flow but exchanging the roles of $M_1$ and $M_2$.
\end{itemize}
Note that $h_1$ and $h_2$ are commuting first integrals of the three Hamiltonian flows.


\medskip

A typical example (that we hereafter refer to as ``Riemannian case") is when $M_\ell$ is a smooth compact connected Riemannian manifold (without boundary) with a smooth Riemannian metric, $\rho_\ell$ is the normalized canonical measure, and $\triangle_\ell$ is the Laplace-Beltrami operator on $M_\ell$, for $\ell=1,2$.\footnote{Note that \ref{UC} and \ref{B} are then satisfied.}
In this case, the Hamiltonian flow is the usual geodesic flow, we consider also what we call the horizontal and the vertical geodesic flows, and we have $h_\ell=g^*_\ell$ where $g^*_\ell$ is the co-metric on $T^*M_\ell$.
This is an elliptic case, but subelliptic H\"ormander operators, like sub-Riemannian Laplacians, can also be considered; then their characteristic manifold is nontrivial.

\medskip

We define the three (conic) sets $\Sigma_1$, $\Sigma_2$ and $\Sigma$ of $T^*M$ by $\Sigma_1=\{h_1=0\}$, $\Sigma_2=\{h_2=0\}$ and $\Sigma=\{h=0\}=\Sigma_1\cap\Sigma_2$. For $\ell=1,2$, the canonical projection of $\Sigma_\ell$ onto $T^*M_\ell$ is the characteristic variety of $\triangle_\ell$. Besides, $\Sigma$ is the characteristic variety of $\triangle$. 
We define $S\Sigma$ as the quotient of $\Sigma$ under the action of positive homotheties acting on $\xi$ in local coordinates $(x,\xi)$. As in \cite{CHT_Duke}, the co-sphere bundle $S^*M$ is identified with the compactification of $\{h=1\}$, which is (at any point of $M$) the set $\{h=1\}$ glued at infinity with $S\Sigma$. The submanifold $\{h=1\}$ of $T^*M$ is identified with an open subset of $S^*M$.



\begin{theorem}\label{thm2}
%
Let $\tilde\mu\in\QL(S^*M)$ be a microlocal QL and let $\mu=\pi_*\tilde\mu\in\QL(M)$. 
The measure $\tilde\mu$ is invariant under the Hamiltonian flow.
Under \ref{MM}:
\begin{enumerate}[label=(\roman*),start=3]
\item\label{itemiii} $\tilde\mu$ is also invariant under both the horizontal and vertical Hamiltonian flows.
\item\label{itemiv} Given any periodic Hamiltonian curve $\tilde\gamma$ on $\{h=1\}$, 
we have $\mu(\gamma(\R))=0$ where $\gamma=\pi\circ\tilde\gamma$. 
In the Riemannian case, we have, equivalently, $\tilde\mu(\tilde\gamma(\R))=0$.
\end{enumerate}
\end{theorem}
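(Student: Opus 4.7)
The classical starting point is that $[\Op(a),\triangle]$ is pseudodifferential of order $d-1$ with principal symbol $\tfrac{1}{i}\{a,h\}$, so that $\langle[\Op(a),\triangle]\phi_n,\phi_n\rangle=0$, composed with the order $-(d-1)$ operator $(1+\triangle)^{-(d-1)/d}$ (whose principal symbol $h^{-(d-1)/d}$ restricts to $1$ on $\{h=1\}$), yields in the limit $\int\{a,h\}\,d\tilde\mu=0$ for every classical symbol $a$ of order $0$; this is the invariance of $\tilde\mu$ under $\vec h$.

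For \emph{(iii)}: under \ref{MM}, each eigenspace $E_{\lambda_{i,j}}(\triangle)$ coincides as a vector space with $E_{\lambda^1_i}(\triangle_1)\otimes E_{\lambda^2_j}(\triangle_2)$, since no two distinct sums $\lambda^1_i+\lambda^2_j$ can coincide (otherwise the multiplicity would exceed $m^1_i m^2_j$). In particular every eigenfunction $\phi_n$ of $\triangle$ is automatically an eigenfunction of $\triangle_1$ with eigenvalue $\lambda^1_{i_n}$. Repeating the commutator argument with $\triangle_1$ in place of $\triangle$---starting from $0=\langle[\Op(a),\triangle_1]\phi_n,\phi_n\rangle$ and normalizing by the same $(1+\triangle)^{-(d-1)/d}$---then gives $\int\{a,h_1\}\,d\tilde\mu=0$ for all $a$ of order $0$, which is exactly invariance under the horizontal flow; the vertical case is symmetric.

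For \emph{(iv)}: let $\Psi_1^s$ denote the horizontal flow on $S^*M$ and $\phi_1^s$ the $\vec h_1$-flow on $T^*M_1$, so that $\pi\circ\Psi_1^s(x_1,x_2,\xi_1,\xi_2)=(\pi_1\phi_1^s(x_1,\xi_1),x_2)$. From (iii), $(\pi\circ\Psi_1^s)_*\tilde\mu=\pi_*\tilde\mu=\mu$ for every $s$. Averaging the identity $\mu(\gamma(\R))=\tilde\mu((\pi\circ\Psi_1^s)^{-1}(\gamma(\R)))$ over $s\in[0,T]$ and applying Fubini,
$$
T\,\mu(\gamma(\R))=\int_{S^*M}\bigl|\{s\in[0,T]:(\pi_1\phi_1^s(x_1,\xi_1),x_2)\in\gamma(\R)\}\bigr|\,d\tilde\mu(x_1,x_2,\xi_1,\xi_2).
$$
In the generic case (both $\gamma_1,\gamma_2$ non-constant), the slice $\{x_1':(x_1',x_2)\in\gamma(\R)\}$ is a finite set for each $x_2\in\gamma_2(\R)$ and empty otherwise. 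When $\xi_1\neq 0$, the map $s\mapsto\pi_1\phi_1^s(x_1,\xi_1)$ is a smooth immersion in $M_1$ (since $\partial h_1/\partial\xi_1\neq 0$ by ellipticity), so its preimage of a finite set has Lebesgue measure zero; when $\xi_1=0$, the curve is constant at $x_1$ and contributes the full interval $[0,T]$ exactly if $(x_1,x_2)\in\gamma(\R)$. Therefore
$$
\mu(\gamma(\R))=\tilde\mu\bigl(\{\xi_1=0\}\cap\pi^{-1}(\gamma(\R))\bigr),
$$
and the symmetric argument with the vertical flow gives $\mu(\gamma(\R))=\tilde\mu(\{\xi_2=0\}\cap\pi^{-1}(\gamma(\R)))$. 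On the cosphere bundle $\{h=1\}$ the two sets $\{\xi_1=0\}$ and $\{\xi_2=0\}$ are disjoint (simultaneous vanishing forces $h=0$), both lie in $\pi^{-1}(\gamma(\R))$, and each has $\tilde\mu$-mass equal to $\mu(\gamma(\R))$; hence $2\mu(\gamma(\R))\leq\tilde\mu(\pi^{-1}(\gamma(\R)))=\mu(\gamma(\R))$, forcing $\mu(\gamma(\R))=0$.

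The expected main obstacle is the transversality step used to discard the $\xi_1\neq 0$ contribution. This works cleanly in the Riemannian/elliptic setting but becomes delicate in the subelliptic case (where $\partial h_\ell/\partial\xi_\ell$ may vanish outside $\xi_\ell=0$, so the appropriate ``stationary'' set is larger than $\{\xi_\ell=0\}$) and in the boundary case $c_\ell:=h_\ell|_{\tilde\gamma}=0$, where $\gamma_\ell$ is constant and the slice becomes one-dimensional rather than finite. In the boundary case, the QL must come from a subsequence of eigenfunctions with bounded frequency in the $\ell$-th factor, which under \ref{MM} forces a product decomposition of $\mu$; non-atomicity of $\rho_\ell$ at the constant projection then closes the argument. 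For the equivalent Riemannian statement $\tilde\mu(\tilde\gamma(\R))=0$ a conceptually simpler route is available: $\tilde\gamma(\R)$ lies in the $2$-torus $C_1\times C_2$ formed by the periodic $\vec h_\ell$-orbits through $\tilde\gamma(0)$, on which $\tilde\mu$ is invariant under the two commuting one-parameter groups $\Psi_1^s$ and $\Psi_2^t$, hence is proportional to Haar measure by uniqueness, and Haar does not charge any one-dimensional subset.
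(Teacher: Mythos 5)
Your treatment of the invariance under the Hamiltonian flow and of Item~\ref{itemiii} matches the paper: the key observation that, under \ref{MM}, any eigenfunction of $\triangle$ is simultaneously an eigenfunction of $(\triangle_1)_{x_1}$ and $(\triangle_2)_{x_2}$ is the same, and the commutator calculus is only cosmetically different (the paper uses $[\triangle^{-(d-1)/d}(\triangle_\ell)_{x_\ell},A]$, you use $[\Op(a),\triangle_\ell]$ normalized by $(1+\triangle)^{-(d-1)/d}$; both need the symbol to be microsupported away from the characteristic variety, which you should state).

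For Item~\ref{itemiv}, however, your Fubini/averaging argument has a genuine gap, and it is located exactly where the paper warns the difficulty sits (Remark~\ref{rem_nonhornonvert}). Your disjointness step $2\mu(\gamma(\R))\leq\tilde\mu(\pi^{-1}(\gamma(\R)))=\mu(\gamma(\R))$ needs the identity
$\mu(\gamma(\R))=\tilde\mu(\{\xi_\ell=0\}\cap\pi^{-1}(\gamma(\R)))$ to hold for \emph{both} $\ell=1,2$. This is fine when $\gamma_1$ and $\gamma_2$ are both non-constant, but that case is already disposed of by the invariance of $\tilde\mu$ under the horizontal and vertical flows (your own Item~\ref{itemiii}; see Remark~\ref{rem_nonhornonvert}). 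When $\tilde\gamma$ is horizontal, so $\gamma_2\equiv x_2^0$, the identity for $\ell=1$ is simply false: if $(x_1,\xi_1)$ lies on a positive scalar multiple of the orbit of $\tilde\gamma_1$, then $s\mapsto\pi_1\phi_1^s(x_1,\xi_1)$ remains in $\gamma_1(\R)$ for \emph{all} $s$, so the inner integral equals $T$ on a set that is not contained in $\{\xi_1=0\}$, and this extra set contains (up to homothety) $\tilde\gamma(\R)$ itself, i.e.\ precisely the set whose $\tilde\mu$-mass you are trying to bound. Meanwhile the $\ell=2$ identity does hold, but it only yields $\mu(\gamma(\R))=\tilde\mu(\{\xi_2=0\}\cap\pi^{-1}(\gamma(\R)))$, which is no contradiction since $\tilde\gamma(\R)\subset\{\xi_2=0\}\cap\pi^{-1}(\gamma(\R))$. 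Your proposed fix for this ``boundary case'' is asserted, not proved: there is no reason why $\mu$ charging $\gamma_1(\R)\times\{x_2^0\}$ should force the underlying eigenfunctions to have bounded $\lambda^2_j$, and without that the ``product decomposition of $\mu$'' does not follow. The ``simpler Haar route'' at the end degenerates for the same reason: when $h_2(\tilde\gamma)=0$, the $\vec h_2$-orbit $C_2$ through $\tilde\gamma(0)$ is a fixed point, $C_1\times C_2$ is one-dimensional and equal to $\tilde\gamma(\R)$, and Haar measure on it certainly charges $\tilde\gamma(\R)$. (Even in the generic case, $C_1$ and $C_2$ need not be closed, so the two-torus picture requires justification.)

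The paper's proof of Item~\ref{itemiv} (Lemma~\ref{lemT}) is built precisely to avoid this: it only exploits the non-degenerate direction ($h_1(\tilde\gamma)>0$), shows that the vertical traces $(F_\varepsilon)_{x_1}$ miss at most $N$ small balls of $M_2$, and then uses the spectral functional bound of Proposition~\ref{main2} together with the fact that no QL of $M_2$ can charge a finite set (again by invariance on $S^*M_2$). That argument handles the horizontal curve case without any symmetry, which is exactly what your approach is missing.
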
 

In the Riemannian case, Theorem \ref{thm2} implies that, under \ref{MM}, any microlocal QL is invariant under both horizontal and vertical geodesic flows (in addition to being invariant under the usual geodesic flow, as it is well known), and moreover does not charge any geodesic. 
In particular, any countable convex combination of Dirac measures of periodic geodesics on $S^*M$ (resp., of projections of geodesics on $M$) cannot be a microlocal QL of $S^*M$ (resp., a local QL of $M$).
This is in strong contrast with the case where the geodesic flow is completely integrable (see \cite{CdV_Duke1979,Toth}) or with the case of a revolution surface (with boundary) of constant negative curvature (see \cite{CdVParisse}) where the Dirac of the equatorial unstable periodic geodesic is a microlocal QL. 

\begin{remark}\label{rem_boundary}
Item \ref{itemiv} of Theorem \ref{thm2} is adapted as follows in the Riemannian case, when $M_1$ and/or $M_2$ are Riemannian manifolds having a smooth boundary: assuming that $\triangle_\ell$ is the Laplace-Beltrami operator on $M_\ell$ with Dirichlet or Neumann boundary conditions if $\partial M_\ell\neq\emptyset$ (for instance, one can take on $M_1$ the Dirichlet Laplacian and on $M_2$ the Neumann Laplacian), Item \ref{itemiv} remains true for any \emph{generalized} geodesic. 

We recall that, unformally, a generalized geodesic is a broken geodesic propagating in $S^*M$, reflecting at the boundary according to the laws of classical optics, and being a usual geodesic in the interior. Rigorously, a generalized geodesic is a projection of a generalized bicharacteristic, that is, a solution of the Melrose-Sj\"ostrand compressed bicharacteristic flow (see \cite[Chapter 24.3]{Hormander3}, \cite{MelroseSjostrand_CPAM1978} or \cite[Section 1C1]{LLTP}).
%
\end{remark}


Theorems \ref{thm1} is proved in Section \ref{sec_proof1}. Theorem \ref{thm2} and Remark \ref{rem_boundary} are proved in Section \ref{sec_proof2}.
It is interesting to note that, in Theorem \ref{thm2}, Item \ref{itemiv} does not follow from Item \ref{itemiii}. More precisely, it easily follows from Item \ref{itemiii} (by contradiction) that a microlocal QL cannot charge any periodic Hamiltonian curve that is neither horizontal nor vertical. But Item \ref{itemiii} gives no contradiction whenever $\tilde\mu=\tilde\rho_1\otimes\delta_{\tilde\gamma_2}$ (and thus $\mu=\rho_1\otimes\delta_{\gamma_2}$) where $\tilde\rho_2$ is the normalized Liouville measure on $S^*M_2$, $\tilde\delta_{\gamma_2}$ is the Dirac along a periodic Hamiltonian curve $\tilde\gamma_2$ on $S^*M_2$, and $\gamma_2=\pi_2\circ\tilde\gamma_2$. Indeed, such a measure $\tilde\mu$ is invariant under the three Hamiltonian flows, and $\mu$ obviously charges $\gamma$.
To discard this case and thus to prove that $\mu$ cannot be a local QL, we develop another proof strategy (see Section \ref{sec_thm2_2}).

Note that, in Section \ref{sec_additional}, we establish some additional properties that are not reported in Theorem \ref{thm2}.

The fact, stated in Item \ref{itemiv}, that $\mu(\gamma(\R))=0$ if and only if $\tilde\mu(\tilde\gamma(\R))=0$ in the Riemannian case, follows from Lemma \ref{lemraynul} in Appendix \ref{app:measures}. We provide in this appendix several facts for QLs in the Riemannian case.

\subsection{Comments on the assumptions}

Assumption \ref{UC} is satisfied if, for $\ell=1,2$, $M_\ell$ is a smooth connected manifold and $\triangle_\ell$ is an elliptic second-order operator whose coefficients are uniformly bounded and with coefficients of the principal part being of class $C^2$ and with Lipschitzian second derivatives (see \cite{Aronszajn}). 
It is also satisfied if $M_\ell$ is an analytic manifold and $\triangle_\ell$ is a sum of squares of analytic vector fields satisfying the H\"ormander assumption (subelliptic H\"ormander operator, see \cite{Bony}).

It follows from Sobolev embedding theorems that Assumption \ref{B} is satisfied if, for $\ell=1,2$, $M_\ell$ is a smooth manifold and $\triangle_\ell$ is an elliptic second-order operator with smooth coefficients, or a sum of squares of smooth vector fields satisfying the H\"ormander assumption (see \cite{RothschildStein}).

Assumption \ref{MM} is equivalent to:
\begin{equation} \label{simplicity2}
\forall (i,j,i',j')\in(\N^*)^4\qquad
\lambda_{i,j}=\lambda_{i',j'} \ \Rightarrow\  (i=i' \; \hbox{ and } j=j') 
\end{equation}
i.e., the family $(\lambda_{i,j})_{(i,j)\in(\N^*)^2}$ consists of the \emph{distinct} eigenvalues of $\triangle$.
Assumption \ref{MM} is generic in some sense, as established in the following result. 

\begin{proposition} \label{dilatation}
Let $\alpha\in\R\setminus\{0\}$ and let $(\triangle_{2,s})_{s>0}$ be a family of self-adjoint operators on $M_2$ such that $\triangle_{2,1}=\triangle_2$ and the $j^{\textrm{th}}$ (distinct) eigenvalue of $\triangle_{2,s}$ is $s^\alpha \lambda^2_j$. Then, the set of $s>0$ such that the spectrum of $\triangle_1 + \triangle_{2,s}$ does not satisfy \ref{MM} is at most countable. 
\end{proposition}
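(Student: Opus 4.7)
The plan is to exploit the equivalent reformulation \eqref{simplicity2} of \ref{MM}: the condition \ref{MM} holds for $\triangle_1+\triangle_{2,s}$ if and only if the family $(\lambda^1_i+s^\alpha\lambda^2_j)_{(i,j)\in(\N^*)^2}$ consists of pairwise distinct real numbers. Thus the ``bad'' set of parameters reads
\begin{equation*}
B=\bigl\{s>0 \ :\ \exists\,(i,j)\neq(i',j')\text{ in }(\N^*)^2,\ \lambda^1_i+s^\alpha\lambda^2_j=\lambda^1_{i'}+s^\alpha\lambda^2_{j'}\bigr\},
\end{equation*}
and our goal is to prove that $B$ is at most countable by writing it as a countable union of singletons.

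The first step is to fix a quadruple $(i,j,i',j')\in(\N^*)^4$ with $(i,j)\neq(i',j')$ and to count the $s>0$ solving
\begin{equation*}
\lambda^1_i-\lambda^1_{i'}=s^\alpha\,(\lambda^2_{j'}-\lambda^2_j).
\end{equation*}
If $j=j'$, then $i\neq i'$ and the right-hand side vanishes while $\lambda^1_i\neq\lambda^1_{i'}$ by distinctness of the $\lambda^1_k$'s, so there is no solution. Symmetrically, if $i=i'$, then $j\neq j'$ and the left-hand side vanishes while $s^\alpha(\lambda^2_{j'}-\lambda^2_j)\neq 0$ since $s^\alpha>0$ and the $\lambda^2_k$'s are distinct, so again there is no solution.

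In the remaining case $i\neq i'$ and $j\neq j'$, the equation becomes $s^\alpha=(\lambda^1_i-\lambda^1_{i'})/(\lambda^2_{j'}-\lambda^2_j)$. Since $\alpha\neq 0$, the map $s\mapsto s^\alpha$ is a strictly monotonic bijection from $(0,\infty)$ onto $(0,\infty)$, so this equation admits at most one positive solution (none if the right-hand side is nonpositive). Taking the union over the countable index set of quadruples $(i,j,i',j')$ with $(i,j)\neq(i',j')$, we conclude that $B$ is a countable union of singletons, hence at most countable.

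The argument presents no real obstacle; it is elementary once \ref{MM} is rephrased via \eqref{simplicity2}. The only point worth isolating is the injectivity of $s\mapsto s^\alpha$ on $(0,\infty)$, which fails precisely when $\alpha=0$ and explains the hypothesis $\alpha\in\R\setminus\{0\}$ imposed in the statement.
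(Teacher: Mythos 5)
Your proof is correct and rests on the same key observation as the paper's: once a quadruple $(i,j,i',j')$ with $i\neq i'$ and $j\neq j'$ is fixed, the equation $\lambda^1_i+s^\alpha\lambda^2_j=\lambda^1_{i'}+s^\alpha\lambda^2_{j'}$ has at most one positive solution because $s\mapsto s^\alpha$ is injective on $(0,\infty)$ for $\alpha\neq 0$. The paper packages this dually, building an injective map from the bad set into $\R^4$ (with countable range) rather than writing the bad set as a countable union of at-most-singletons, but the two formulations are interchangeable and the mathematical content is identical.
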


Proposition \ref{dilatation} is proved in Appendix~\ref{proof:propdilat}. It applies for example to the case where $\triangle_{2,s}$ is the Laplace-Beltrami operator for the metric $s g_2$ (with $\alpha=-1$).

Assumption \ref{MM} is not satisfied in the Dirichlet square: take $M_\ell=[0,\pi]$ endowed with the Lebesgue measure and with the standard Dirichlet Laplacian, so that $\lambda^\ell_i=i^2$ for every $i\in\N^*$, for $\ell=1,2$. Then, $\triangle$ is the standard Dirichlet Laplacian in the square $M=[0,\pi]^2$, of eigenvalues $\lambda_{i,j}=i^2+j^2$, for $i,j\in\N^*$. The minimal multiplicity property \ref{MM} is not satisfied because of the existence of positive integers that can be written in more than one way as the sum of two squares of positive integers. An example is $50=1^2+7^2=5^2+5^2$. Actually, given any $p\in\N^*$, there exist positive integers that can be written in $p$ different ways as the sum of two squares of positive integers. Hence, although the 1D Dirichlet Laplacian satisfies $\displaystyle \inf_{\mu\in\QL([0,\pi])}\mu(\omega)>0$ for any measurable subset $\omega$ of $[0,\pi]$ of positive Lebesgue measure\footnote{This follows from the inequality $$\inf_{j\in\N^*} \int_\omega \sin^2(jx)\, dx \geq \frac{1}{2}(\pi\vert\omega\vert-\sin(\pi\vert\omega\vert))$$ for any $\omega\subset[0,\pi]$ measurable, see \cite[Lemma 6]{PTZ_HUM}.},
Theorem \ref{thm1} does not apply to that case. Yet, we have the following result.

\begin{lemma}\label{lem_square}
For every open subset $\omega$ of $(0,\pi)^2$, there exists $C_\omega>0$ such that
\begin{equation}\label{conj_grebenkov}
\int_\omega \vert\phi\vert^2\, dx   \geq   C_\omega  \int_{[0,\pi]^2} \vert\phi\vert^2\, dx
\end{equation}
for any eigenfunction of the standard Dirichlet Laplacian on $[0,\pi]^2$.
As a consequence, the conclusion of Item \ref{itemi} of Theorem \ref{thm1} is satisfied for the Dirichlet Laplacian in the square: $\displaystyle\inf_{\mu \in \QL(M) } \mu(\omega)>0$ for any open subset $\omega$ of $M=[0,\pi]^2$.
\end{lemma}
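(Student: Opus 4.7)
My plan is to derive the spectral estimate \eqref{conj_grebenkov} from the internal observability of the Schr\"odinger equation on the square due to Jaffard, and then to deduce the statement about quantum limits by a Portmanteau-type argument.

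The crucial input is Jaffard's theorem: for any open subset $\omega$ of $(0,\pi)^2$ and any $T>0$, there exists $C(\omega,T)>0$ such that every solution $u$ of $i\partial_tu=-\Delta u$ on $(0,\pi)^2$ with homogeneous Dirichlet boundary conditions satisfies
\begin{equation}\label{plan_jaffard}
\int_0^T\!\int_\omega|u(t,x,y)|^2\,dx\,dy\,dt \geq C(\omega,T)\,\|u(0)\|_{L^2}^2 .
\end{equation}
Applied to $u(t,\cdot)=e^{-iNt}\phi$, where $\phi$ is any Dirichlet eigenfunction of $-\Delta$ on $[0,\pi]^2$ with eigenvalue $N$, one has $|u(t,\cdot)|^2\equiv|\phi|^2$, so the left-hand side of \eqref{plan_jaffard} reduces to $T\int_\omega|\phi|^2\,dx\,dy$ and \eqref{conj_grebenkov} follows with $C_\omega=C(\omega,T)/T$ for any fixed $T>0$.

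To conclude the statement on QLs, I would pick an open set $\omega_0$ with $\overline{\omega_0}\subset\omega$ (possible since $\omega$ is open) and apply \eqref{conj_grebenkov} to $\omega_0$: for every unit-norm eigenfunction $\phi$, $\mu_\phi(\overline{\omega_0})\geq\mu_\phi(\omega_0)\geq C_{\omega_0}$. If $\mu\in\QL(M)$ is the weak limit of a sequence $(\mu_{\phi_{N_k}})$, then the Portmanteau theorem applied to the closed set $\overline{\omega_0}$ yields
$$
\mu(\omega)\ \geq\ \mu(\overline{\omega_0})\ \geq\ \limsup_k\mu_{\phi_{N_k}}(\overline{\omega_0})\ \geq\ C_{\omega_0} .
$$
The lower bound being independent of $\mu$, the desired infimum is at least $C_{\omega_0}>0$.

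The real difficulty lies in the spectral inequality \eqref{conj_grebenkov} itself. The multiplicities $m_N=\#\{(i,j)\in(\N^*)^2:i^2+j^2=N\}$ are unbounded in $N$, so one cannot reduce to the 1D observability via a tensor-product (Hadamard) argument uniformly in $N$ — which is exactly the failure of \ref{MM} in this setting. Jaffard's proof instead relies on a delicate two-dimensional Ingham-type inequality for exponential sums indexed by the lattice points lying on the circles of radius $\sqrt{N}$, and this harmonic-analytic input is the genuine content of the estimate.
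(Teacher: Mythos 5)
Your proof is correct and follows essentially the same route as the paper's: the key input in both is Jaffard's observability result for the Schr\"odinger equation on the square (the paper cites the Fourier-series form \eqref{ineg_jaffard} and restricts the coefficients to a single circle $E_\lambda$ of lattice points, which is the same as your trick of applying observability to the stationary solution $e^{-iNt}\phi$ so that the time integral trivializes). Your Portmanteau argument for the QL conclusion, via a nested open set $\omega_0$ with $\overline{\omega_0}\subset\omega$, correctly handles the open-vs-closed direction of Portmanteau and fills in a step the paper leaves implicit.
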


Lemma \ref{lem_square} is proved in Appendix~\ref{proof:lem_square}.
Note that \eqref{conj_grebenkov} gives a positive answer to the open problem raised in \cite[Section 7.7.3]{GrebenkovNguyen} and \cite[Section 4.1]{NguyenGrebenkov}.

Note that, thanks to Proposition \ref{dilatation}, if instead of $M_2=[0,\pi]$ we take $M_2=[0,\pi/s]$ with $s>0$ (rectangle domain), then \ref{MM} is satisfied for every $s>0$ except for a countable number of exceptional values (including $s=1$).

\subsection{Comments on QLs}
There are quite few existing results on regularity properties of QLs. The properties studied in Items \ref{itemi} (local QLs being of full support) and \ref{itemii} (local QLs being absolutely continuous) of Theorem \ref{thm1} are rather exceptional.

In 1D, obviously, the Dirichlet Laplacian has a unique microlocal QL that is the Liouville measure: we have quantum unique ergodicity (QUE). The QUE property is also valid on any compact arithmetic surface (see \cite{Lindenstrauss}). To date, these are the only cases where QUE is known.

In \cite{Jakobson}, the author shows (among other results) that, on the flat torus of any dimension, all local QLs of the Laplace-Beltrami operator are absolutely continuous. 
In \cite{PTZ_JEMS}, all local QLs of the Dirichlet Laplacian in the disk are computed, and they are all absolutely continuous except one which is the Dirac along the boundary.

On the Euclidean sphere (see \cite{JakobsonZelditch}) or more generally on Zoll manifolds with maximally degenerate Laplacian (see \cite{Macia,Zelditch1996}), any Radon probability measure on the co-sphere bundle that is invariant under the geodesic flow is a microlocal QL of the Laplace-Beltrami operator; in particular, the Dirac along any (periodic) geodesic is a microlocal QL. 

In \cite{Anantharaman2008}, it is proved that on a negatively curved compact Riemannian manifold, every microlocal QL has positive metric entropy and thus cannot be the Dirac along a periodic geodesic (see also \cite{AnantharamanNonnenmacher,Nonnenmacher_AIF2017}).
It is proved in \cite{DyatlovLin} that, on any compact connected Riemannian surface of constant negative curvature, any microlocal QL has full support, i.e., charges any nonempty open subset of the cosphere bundle.

While all above results are for Riemannian (elliptic) Laplacians, decomposition results for microlocal QLs are given in \cite[Theorem B]{CHT_Duke} for sub-Riemannian (subelliptic) Laplacians and in \cite{Letrouit} more particularly for flat Heisenberg cases.

\section{Preliminary results on some spectral functionals}
\subsection{Definition and first properties}
As at the beginning of Section \ref{sec_setting}, we consider a smooth compact manifold $M$ endowed with a probability measure $\rho$ and a self-adjoint operator $\triangle$ on $L^2(M,\rho)$. We denote by $\mathcal{E}$ the set of all normalized eigenfunctions $\phi_\lambda$ of $\triangle$ in $L^2(M,\rho)$, i.e., $\triangle\phi_\lambda=\lambda\phi_\lambda$ and $\Vert\phi_\lambda\Vert_{L^2(M,\rho)}=1$ for every $\lambda\in\mathrm{Spec}(\triangle)$, and by $\QL(M)$ the set of local QLs on $M$.

\begin{definition}
Given any nonnegative bounded measurable function $a$ on $(M,\rho)$, we define
\begin{equation}\label{defg}
\g_M(a) =\inf_{\phi\in\mathcal{M}} \int_M a \vert\phi\vert^2 \, d\rho
\end{equation}
and
\begin{equation}\label{defgprime}
\g'_M(a) =\inf_{\mu\in\QL(M)} \mu(a) .
\end{equation}
\end{definition}

By the definition \eqref{defg} of $\g_M$, for every eigenfunction $\phi$ of $\triangle$, we have
\begin{equation}\label{inegg}
\int_M a \vert\phi\vert^2 \, d\rho \geq \g_M(a) \Vert\phi\Vert_{L^2(M,\rho)}^2 .
\end{equation}
When $a=\mathds{1}_\omega$ is the characteristic function\footnote{By definition, $\mathds{1}_\omega(x)$=1 if $x\in\omega$, and $0$ otherwise.} of some measurable subset $\omega\subset M$, we write $\g_M(\omega)$ instead of $\g_M(\mathds{1}_\omega)$ and we note that $\g_M(\omega)\in[0,1]$ and that $\g_M(M)=1$ (and similarly for $\g'_M$).

The spectral functional $\g_M$ has been introduced in \cite{hpt} (with the notation $g_1$) in order to investigate observability properties of wave equations.

\begin{lemma}\label{lemggprime}
We have $\g_M(a)\leq\g'_M(a)$ for every nonnegative bounded Borel function $a$ on $(M,\rho)$ for which the $\mu$-measure of the set of discontinuities of $a$ is zero for every $\mu\in\QL(M)$.
We have $\g_M(\omega)\leq \g'_M(\omega)$ for every closed subset $\omega$ of $M$.

Given any open subset $\omega$ of $M$, under \ref{UC}, if $\g_M(\omega)=0$ then $\g'_M(\omega)=0$.
\end{lemma}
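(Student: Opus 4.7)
All three parts rest on combining the variational definition of $\g_M$ with weak-$*$ compactness of probability Radon measures and the portmanteau theorem, the three cases differing only in which portmanteau inequality is invoked.

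For the first inequality, I would fix any $\mu \in \QL(M)$ and select, by definition of a local QL, a sequence of normalized eigenfunctions $(\phi_n)$ with eigenvalues $\lambda_n \to +\infty$ such that the measures $\vert \phi_n \vert^2\,\rho$ converge weakly to $\mu$. The hypothesis that $\mu$ assigns zero measure to the discontinuity set of $a$ is precisely the standard portmanteau criterion that upgrades weak convergence of measures to convergence $\int_M a\,\vert \phi_n \vert^2\,d\rho \to \mu(a)$ against the bounded measurable test $a$. Each integral is $\geq \g_M(a)$ by \eqref{defg}, so the limit is too, yielding $\mu(a) \geq \g_M(a)$; taking infimum over $\mu$ gives $\g_M(a) \leq \g'_M(a)$. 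For a closed subset $\omega$, $\mathds{1}_\omega$ is upper semicontinuous, and the closed-set portmanteau inequality delivers $\mu(\omega) \geq \limsup_n \int_M \mathds{1}_\omega\,\vert \phi_n \vert^2\,d\rho \geq \g_M(\omega)$, which is the second claim.

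For the third claim, I would assume $\g_M(\omega) = 0$ with $\omega$ open and choose a minimizing sequence $(\phi_n) \subset \mathcal{E}$ with $\int_\omega \vert \phi_n \vert^2\,d\rho \to 0$. The only non-routine step, and the sole place where \ref{UC} enters, is to show that the eigenvalues $\lambda_n$ are unbounded, for otherwise the weak limit of $\vert \phi_n \vert^2\,\rho$ need not qualify as a QL. Suppose by contradiction that $\lambda_n$ is bounded along a subsequence; by discreteness of the spectrum and finite multiplicity, one can extract a sub-subsequence lying in a single finite-dimensional eigenspace, hence converging in $L^2(M,\rho)$ to a normalized eigenfunction $\phi$ with $\int_\omega \vert \phi \vert^2\,d\rho = 0$. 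When $\omega$ has positive $\rho$-measure, this forces $\phi$ to vanish on a set of positive $\rho$-measure, so \ref{UC} gives $\phi \equiv 0$, contradicting $\Vert \phi \Vert_{L^2} = 1$ (the degenerate case $\rho(\omega) = 0$ is immediate: any QL produced from eigenfunctions with eigenvalues tending to infinity satisfies $\mu(\omega) \leq \liminf \int_\omega \vert \phi_n \vert^2\,d\rho = 0$ by the open-set portmanteau inequality). With $\lambda_n \to +\infty$ secured in the generic case, weak compactness yields a subsequential weak limit $\mu \in \QL(M)$, and the open-set portmanteau inequality then gives $\mu(\omega) \leq \liminf_n \int_\omega \vert \phi_n \vert^2\,d\rho = 0$, hence $\g'_M(\omega) = 0$.

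The main obstacle is the eigenspace-extraction argument in the third part: ruling out the possibility that the infimum $\g_M(\omega) = 0$ is attained along a sequence of eigenfunctions confined to a single finite-dimensional eigenspace, since then the weak limit of $\vert \phi_n \vert^2\,\rho$ is not a QL. It is precisely here that \ref{UC} is indispensable, and only here; the remainder is bookkeeping of the three forms of portmanteau (equality for continuity points of $a$, upper semicontinuity for closed sets, lower semicontinuity for open sets) against the regularity of the test data.
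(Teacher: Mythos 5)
Your proof is correct and follows essentially the same route as the paper's: the first two claims by the appropriate portmanteau inequalities, and the third by splitting into the bounded-eigenvalue case (ruled out via \ref{UC}) and the case $\lambda_n\to+\infty$ (handled by the open-set portmanteau inequality). Your eigenspace-extraction argument just makes explicit the compactness step behind the paper's quick dichotomy ``either $\int_\omega\vert\phi\vert^2\,d\rho=0$ for some $\phi\in\mathcal{E}$ or $\liminf_{\lambda\to+\infty}\int_\omega\vert\phi_\lambda\vert^2\,d\rho=0$'', which is a welcome clarification rather than a deviation.
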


\begin{proof}
The two first claims follow from the Portmanteau theorem (see Appendix \ref{cintre}) and by definition of local QLs. Let us prove the third claim. If $\g_M(\omega)=0$ for some $\omega\subset M$ open, then either $\int_\omega\vert\phi\vert^2\,d\rho=0$ for some $\phi\in\mathcal{E}$ or $\liminf_{\lambda\rightarrow+\infty}\int_\omega\vert\phi_\lambda\vert^2\,d\rho=0$. The first case cannot happen because \ref{UC} would imply that $\phi=0$. Therefore $\liminf_{\lambda\rightarrow+\infty}\int_\omega\vert\phi_\lambda\vert^2\,d\rho=0$. Taking a subsequence if necessary, there exists $\mu\in\QL(M)$ that is a weak limit of $\vert\phi_\lambda\vert^2\,\rho$. Since $\omega$ is open, it follows from the Portmanteau theorem (see Appendix \ref{cintre}; note that $M$ is metrisable) that $\mu(\omega)=0$. The lemma is proved.
\end{proof}

\subsection{On a product manifold}
Still with the notations introduced in Section \ref{sec_setting}, we now consider the product manifold $M=M_1 \times M_2$, and we consider the spectral functionals $\g_M$, $g_{M_1}$ and $g_{M_2}$.
Throughout the article, we use the following notation: given any measurable subset $\omega$ of $M=M_1 \times M_2$, given any $x_1\in M_1$, we define
$$
\omega_{x_1}=\{ x_2 \in M_2 \ \mid\ (x_1,x_2) \in \omega\} = \omega \cap ( \{x_1\}\times M_2 )
$$
(vertical trace of $\omega$ above $x_1$).


\begin{proposition} \label{main2} 
Assume that the spectrum of $\triangle = \triangle_1\otimes\mathrm{id}_{M_2} + \mathrm{id}_{M_1}\otimes\triangle_2$ satisfies \ref{MM}. 
Then, for every measurable subset $\omega$ of $M=M_1 \times M_2$, we have 
$$
\g_M(\omega) \geq \g_{M_1} ( x_1\mapsto\g_{M_2}(\omega_{x_1}) ) .
$$
\end{proposition}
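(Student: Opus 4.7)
The plan is to exploit \ref{MM} to decompose every eigenfunction of $\triangle$ into a tensor product form, then apply the one-dimensional inequality \eqref{inegg} successively in each factor.

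First I would fix a normalized eigenfunction $\phi\in\mathcal{E}$ of $\triangle$ at eigenvalue $\lambda_{i,j}=\lambda^1_i+\lambda^2_j$. Under \ref{MM}, the eigenspace of $\triangle$ at $\lambda_{i,j}$ coincides with the tensor product $E^1_i\otimes E^2_j$ of the eigenspaces of $\triangle_1$ and $\triangle_2$ (because equality of dimensions $m_{i,j}=m^1_im^2_j$ combined with the obvious inclusion forces equality). Expanding $\phi$ in the Hilbert basis $(\phi^1_{i,k})_{k}$ in the first variable, I obtain
$$
\phi(x_1,x_2)=\sum_{k=1}^{m^1_i}\phi^1_{i,k}(x_1)\,\psi_k(x_2),
$$
where each $\psi_k$ belongs to $E^2_j$ (possibly zero). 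In particular, for every fixed $x_1\in M_1$, the function $x_2\mapsto\phi(x_1,x_2)$ is either zero or an eigenfunction of $\triangle_2$ at $\lambda^2_j$.

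Next, I would apply \eqref{inegg} to $\triangle_2$ with the set $\omega_{x_1}$: for every $x_1\in M_1$,
$$
\int_{M_2}\mathds{1}_\omega(x_1,x_2)\,|\phi(x_1,x_2)|^2\,d\rho_2(x_2)\ \geq\ \g_{M_2}(\omega_{x_1})\,\Phi(x_1),\qquad \Phi(x_1):=\int_{M_2}|\phi(x_1,x_2)|^2\,d\rho_2(x_2).
$$
(The inequality is also trivial when $\phi(x_1,\cdot)\equiv 0$.) Integrating against $\rho_1$ and using Fubini gives $\int_\omega|\phi|^2\,d\rho\geq \int_{M_1}\g_{M_2}(\omega_{x_1})\,\Phi(x_1)\,d\rho_1(x_1)$. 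A brief measurability check is needed here: since the eigenfunctions of $\triangle_2$ form a countable family, $x_1\mapsto\g_{M_2}(\omega_{x_1})$ is an infimum of countably many measurable maps (each measurable by Fubini), hence measurable.

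The key remaining step is to recognize $\Phi$ as a legitimate test quantity for $\g_{M_1}$. Writing
$$
\Phi(x_1)=\sum_{k,k'}\phi^1_{i,k}(x_1)\overline{\phi^1_{i,k'}(x_1)}\,\langle\psi_k,\psi_{k'}\rangle_{L^2(M_2,\rho_2)},
$$
and diagonalizing the Hermitian positive semidefinite Gram matrix $A=(\langle\psi_k,\psi_{k'}\rangle)$ as $A=U^*DU$ with $D=\mathrm{diag}(d_1,\dots,d_{m^1_i})$, I set $\tilde\phi_{i,m}^1=\sum_k U_{m,k}\phi^1_{i,k}$. These are again orthonormal eigenfunctions of $\triangle_1$ at $\lambda^1_i$, and
$$
\Phi(x_1)=\sum_{m=1}^{m^1_i}d_m\,|\tilde\phi_{i,m}^1(x_1)|^2,\qquad \sum_m d_m=\mathrm{tr}(A)=\sum_k\|\psi_k\|_{L^2(M_2)}^2=\|\phi\|_{L^2(M)}^2=1.
$$
Applying \eqref{inegg} to each $\tilde\phi^1_{i,m}$ with $a(x_1)=\g_{M_2}(\omega_{x_1})$ yields
$$
\int_{M_1}\g_{M_2}(\omega_{x_1})\,\Phi(x_1)\,d\rho_1(x_1)=\sum_m d_m\int_{M_1}\g_{M_2}(\omega_{x_1})|\tilde\phi^1_{i,m}(x_1)|^2\,d\rho_1\ \geq\ \g_{M_1}\bigl(x_1\mapsto\g_{M_2}(\omega_{x_1})\bigr),
$$
and taking the infimum over $\phi\in\mathcal{E}$ produces the announced inequality.

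The main obstacle I anticipate is the decomposition step: without \ref{MM}, an eigenfunction of $\triangle$ at $\lambda$ may mix contributions coming from several pairs $(\lambda^1_i,\lambda^2_j)$ with $\lambda^1_i+\lambda^2_j=\lambda$, and then the slice $\phi(x_1,\cdot)$ is no longer an eigenfunction of $\triangle_2$ and the crucial inequality for $\g_{M_2}(\omega_{x_1})$ fails. The remaining work — the Gram-matrix diagonalization that turns $\Phi$ into a convex combination of squared moduli of orthonormal eigenfunctions of $\triangle_1$ — is linear-algebraic and routine.
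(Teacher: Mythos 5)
Your proof is correct and essentially coincides with the paper's: both rest on the observation that \ref{MM} forces the $\lambda_{i,j}$-eigenspace of $\triangle$ to be exactly $E^1_i\otimes E^2_j$, followed by two successive applications of \eqref{inegg} — slice-wise on $M_2$ via Fubini, then on $M_1$ with the test function $x_1\mapsto\g_{M_2}(\omega_{x_1})$. The paper sidesteps your Gram-matrix diagonalization by expanding $\phi=\sum_l b_{i,l}\,\phi^2_{j,l}$ in the orthonormal basis of the \emph{second} factor, so that $\Phi(x_1)=\sum_l\vert b_{i,l}(x_1)\vert^2$ with each $b_{i,l}$ already an eigenfunction of $\triangle_1$; one small correction to your measurability remark: the set of \emph{normalized} eigenfunctions of $\triangle_2$ is uncountable when multiplicities exceed one, but the infimum defining $\g_{M_2}$ may be taken over a countable dense subset of each unit eigensphere, which restores your argument.
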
 

\begin{proof}
For $\ell=1,2$, we fix a Hilbert basis $(\phi^\ell_{i,k})_{i\in\N,\, k\in\{1,\ldots,m^\ell_i\}}$ of $L^2(M_\ell,\rho_\ell)$ consisting of eigenfunctions of $\triangle_\ell$, as in \eqref{Hilbertbasis}.

It follows from Assumption \ref{MM} that, for any pair $(i,j)\in(\N^*)^2$, the functions $\phi^1_{i,k} \otimes \phi^2_{j,l}$, for $k\in\{1,\ldots,m^1_i\}$ and $l\in\{1,\ldots,m^2_j\}$, form an orthonormal basis of the eigenspace associated to the eigenvalue $\lambda_{i,j}=\lambda^1_i +\lambda^2_j$ of $\triangle$ (this is because, using \eqref{simplicity2}, there is no other pair $(i',j')\in(\N^*)^2$ such that $\lambda_{i,j}=\lambda^1_{i'} +\lambda^2_{j'}$). This remark is crucial for the arguments hereafter.

In what follows, we identify the function $x_1\mapsto\phi_i^1(x_1)$ (resp., $x_2\mapsto\phi_l^2(x_2)$) defined on $M_1$ (resp., on $M_2$) to its natural extension to $M$, namely the function $(x_1,x_2)\mapsto\phi_i^1(x_1)$ (resp., $(x_1,x_2)\mapsto\phi_l^2(x_2)$), so that $\phi_i^1 \otimes \phi_l^2$ is identified to the product $\phi_i^1  \phi_l^2$.

Let $\phi$ be an arbitrary eigenfunction of $\triangle$, of norm $\Vert\phi\Vert_{L^2(M,\rho)}=1$. There exists a (unique) pair $(i,j)\in(\N^*)^2$ such that $\phi$ is associated with the eigenvalue $\lambda_{i,j}$. By the above remark, $\phi$ can be expanded as
\begin{equation}\label{phiexpanded}
\phi= \sum_{k=1}^{m^1_i} \sum_{l=1}^{m^2_j} a_{k,l}\, \phi^1_{i,k} \phi^2_{j,l} = \sum_{l=1}^{m^2_j} b_{i,l} \phi^2_{j,l}  \qquad  \textrm{with}\qquad \sum_{k=1}^{m^1_i} \sum_{l=1}^{m^2_j} \vert a_{k,l} \vert^2 = 1 
\end{equation}
where we have set $b_{i,l} = \sum_{k=1}^{m^1_i} a_{k,l}\, \phi^1_{i,k}$.
Note that, for every $l\in\{1,\ldots,m^2_j\}$, the function $b_{i,l}$ does not depend on the variable $x_2\in M_2$ and is an eigenfunction of $\triangle_1$ associated to the eigenvalue $\lambda_i^1$, of norm given (using that the family $(\phi^1_{i,k})_{k\in\{1,\ldots,m^1_i\}}$ is orthonormal in $L^2(M_1,\rho_1)$) by
\begin{equation}\label{normbjl}
\Vert b_{i,l}\Vert_{L^2(M_1,\rho_1)}^2 = \sum_{k=1}^{m^1_i} \vert a_{k,l}\vert^2 . 
\end{equation}
Let $\omega \subset M$ be a measurable subset. By the Fubini theorem, we have
\begin{equation}\label{eq_fubini}
\int_\omega \vert \phi\vert^2 \, d\rho = \int_{M_1} \int_{\omega_{x_1}} \left\vert \sum_{l=1}^{m^2_j} b_{i,l}(x_1) \phi^2_{j,l}(x_2) \right\vert^2  d\rho_2(x_2) \, d\rho_1(x_1) .
\end{equation}
Noting that, for every fixed $x_1\in M_1$, the function $\sum_{l=1}^{m^2_j} b_{i,l}(x_1) \phi^2_{j,l}$ is an eigenfunction of $\triangle_2$ associated to the eigenvalue $\lambda^2_j$, of $L^2$ norm given (using that the family $(\phi^2_{j,l})_{l\in\{1,\ldots,m^2_j\}}$ is orthonormal in $L^2(M_2,\rho_2)$) by 
$$
\left\Vert \sum_{l=1}^{m^2_j} b_{i,l}(x_1) \phi^2_{j,l} \right\Vert_{L^2(M_2,\rho_2)}^2 = \sum_{l=1}^{m^2_j} \vert b_{i,l}(x_1)\vert^2   ,
$$ 
we infer from \eqref{inegg} (with $a=(x_2\mapsto\mathds{1}_{\omega_{x_1}}(x_2))$ on $M_2$) that
\begin{equation}\label{minor2}
\int_{\omega_{x_1}} \left\vert \sum_{l=1}^{m^2_j} b_{i,l}(x_1) \phi^2_{j,l}(x_2) \right\vert^2  d\rho_2(x_2) \geq \g_{M_2}(\omega_{x_1}) \sum_{l=1}^{m^2_j} \vert b_{i,l}(x_1)\vert^2   \qquad\forall x_1\in M_1.  
\end{equation}
It follows from \eqref{eq_fubini} and \eqref{minor2} that
\begin{equation}\label{18:00}
\int_\omega \vert \phi\vert^2 dm  \geq \sum_{l=1}^{m^2_j} \int_{M_1} \g_{M_2}(\omega_{x_1}) \vert b_{i,l}(x_1)\vert^2\, d\rho_1(x_1) .
\end{equation}
For every $l\in\{1,\ldots,m^2_j\}$, since $b_{i,l}$ is an eigenfunction of $\triangle_1$ associated to the eigenvalue $\lambda^1_i$, of $L^2$ norm given by \eqref{normbjl}, we infer from \eqref{inegg} (with $a=(x_1\mapsto\g_{M_2}(\omega_{x_1}))$ on $M_1$) that
\begin{equation}\label{18:01}
\int_{M_1} \g_{M_2}(\omega_{x_1}) \vert b_{i,l}(x_1)\vert^2\, d\rho_1(x_1) \geq \g_{M_1} ( x_1\mapsto\g_{M_2}(\omega_{x_1}) ) \sum_{k=1}^{m^1_i} \vert a_{k,l}\vert^2  . 
\end{equation}
Summing \eqref{18:01} over $l\in\{1,\ldots,m^2_j\}$ and using the right-hand equality of \eqref{phiexpanded}, we infer from \eqref{18:00} that 
 $$
 \int_\omega \vert \phi\vert^2 dm  \geq \g_{M_1} ( x_1\mapsto\g_{M_2}(\omega_{x_1}) ) .
 $$
Since $\phi$ is an arbitrary normalized eigenfunction of $\triangle$, using \eqref{inegg}, the conclusion follows. 
\end{proof}


\begin{remark}
In some sense, the argument of the above proof generalizes the Fubini argument of \cite[Proof of Theorem 4.5]{Laurent_MCRF2014}, noticed in \cite{BurqZworski} to prove that, on a product of two Riemannian manifolds, the Schr\"odinger equation is observable on $\omega_1\times M_2$ in time $T$ for any open subset $\omega_1$ of $M_1$ satisfying the geometric control condition on $M_1$ in time $T$. 
However, it does not seem that Proposition \ref{main2} (nor Theorem \ref{thm1}) can yield significant new results as concerns observability issues.
\end{remark}

\section{Proof of Theorem \ref{thm1}}\label{sec_proof1}
\subsection{Proof of \ref{itemi}}
We have to prove that, if $\displaystyle\inf_{\mu \in \QL(M_\ell) } \mu(\omega_\ell)>0$ for any open subset $\omega_\ell \subset M_\ell$, for $\ell=1,2$, then $\displaystyle\inf_{\mu \in \QL(M) } \mu(\omega)>0$ for any open subset $\omega \subset M$.

Let $\omega$ be an open subset of $M$. 
There exists two open balls $B_1 \subset M_1$ and $B_2 \subset M_2$ such that $B_1 \times B_2\subset\omega$.  By assumption, we have
\begin{equation} \label{assump}
\g'_{M_1}(B_1) = \inf_{\mu \in \QL(M_1) } \mu(B_1)>0\qquad \textrm{and}\qquad \g'_{M_2}(B_2) = \inf_{\mu \in \QL(M_2) } \mu(B_2)>0.
\end{equation}
Since $B_2$ is open, using \ref{UC} it follows from the last item of Lemma \ref{lemggprime} that $\g_{M_2}(B_2) >0$. 


Since $B_2 \subset \omega_{x_1}$ for every $x_1 \in B_1$, the function $x_1 \mapsto \g_{M_2}(\omega_{x_1})$ is bounded below, on $M_1$, by the function $\g_{M_2}(B_2)\mathds{1}_{B_1}$. Using \eqref{assump}, we infer that 
$$
\g_{M_1}(x_1\mapsto\g_{M_2}(\omega_{x_1})) \geq \g_{M_1}(B_1) \g_{M_2}(B_2) >0 .
$$
Using \ref{MM} and applying Proposition~\ref{main2}, we obtain $\g_M(\omega) \geq \g_{M_1}(B_1) \g_{M_2}(B_2)$. The conclusion follows.

\subsection{Proof of \ref{itemii}}\label{sec_proof_ii}

We assume that every $\mu_\ell \in \QL(M_\ell)$ is absolutely continuous with respect to $\rho_\ell$.

\begin{lemma} \label{step1} 
Let $\ell\in\{1,2\}$. Let $(A_\varepsilon)_{\varepsilon>0}$ be a nonincreasing family of nested closed sets in $M_\ell$, i.e., such that $A_\varepsilon \subset A_{\varepsilon'}$ if $0< \varepsilon < \varepsilon'$ and  $\rho_\ell(A_\varepsilon)\rightarrow 0$ as $\varepsilon\rightarrow 0$. Then
$$
\lim_{\varepsilon \rightarrow 0} \g_{M_\ell}(M \setminus A_\varepsilon)= 1.
$$
\end{lemma}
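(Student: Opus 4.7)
The plan is to prove the contrapositive quantitative statement: $\sup_{\phi \in \mathcal{E}} \int_{A_\varepsilon} |\phi|^2\, d\rho_\ell \to 0$ as $\varepsilon\to 0$, which is equivalent to $\g_{M_\ell}(M_\ell\setminus A_\varepsilon)\to 1$ since $\rho_\ell$ is a probability measure. I would argue by contradiction. Assume there exist $\delta>0$, a sequence $\varepsilon_n\downarrow 0$, and normalized eigenfunctions $\phi_n\in\mathcal{E}$ of $\triangle_\ell$ with eigenvalues $\lambda_n$ such that
$$
\int_{A_{\varepsilon_n}} |\phi_n|^2\, d\rho_\ell \;\geq\; \delta \qquad \forall n.
$$

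By weak compactness of probability Radon measures on the compact space $M_\ell$, I would extract a subsequence so that the probability measures $\tilde\mu_n = |\phi_n|^2\rho_\ell$ converge weakly to some probability measure $\tilde\mu$ on $M_\ell$. The key step is to show that $\tilde\mu$ is absolutely continuous with respect to $\rho_\ell$, which I would do by splitting into two cases. If $(\lambda_n)$ is bounded, then up to extraction the $\phi_n$ all lie in a single finite-dimensional eigenspace, so compactness of the unit ball there yields $\phi_{n_k}\to\phi_\infty$ in $L^2(M_\ell,\rho_\ell)$ for some normalized eigenfunction $\phi_\infty$; then $\tilde\mu = |\phi_\infty|^2\rho_\ell$, which has density $|\phi_\infty|^2\in L^1(\rho_\ell)$. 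If $\lambda_n\to+\infty$, then by definition $\tilde\mu\in\QL(M_\ell)$, and absolute continuity is furnished by the hypothesis of the lemma. In either case $\tilde\mu\ll\rho_\ell$.

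Now I would exploit that the family $(A_{\varepsilon_n})$ is nested and closed. For any fixed index $n_0$, one has $A_{\varepsilon_n}\subset A_{\varepsilon_{n_0}}$ for all $n\geq n_0$, so
$$
\int_{A_{\varepsilon_n}} |\phi_n|^2\, d\rho_\ell \;\leq\; \tilde\mu_n(A_{\varepsilon_{n_0}}).
$$
Since $A_{\varepsilon_{n_0}}$ is closed, the Portmanteau theorem (as invoked in Appendix \ref{cintre}) gives
$$
\limsup_{n\to+\infty} \tilde\mu_n(A_{\varepsilon_{n_0}}) \;\leq\; \tilde\mu(A_{\varepsilon_{n_0}}),
$$
and hence $\limsup_n \int_{A_{\varepsilon_n}}|\phi_n|^2\,d\rho_\ell \leq \tilde\mu(A_{\varepsilon_{n_0}})$ for every $n_0$. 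Letting $n_0\to\infty$, continuity of the finite measure $\tilde\mu$ from above yields $\tilde\mu(A_{\varepsilon_{n_0}})\to \tilde\mu\bigl(\bigcap_n A_{\varepsilon_n}\bigr)$, and because $\rho_\ell\bigl(\bigcap_n A_{\varepsilon_n}\bigr)=0$ (by the analogous continuity applied to $\rho_\ell$, using $\rho_\ell(A_{\varepsilon_n})\to 0$) together with $\tilde\mu\ll\rho_\ell$, this limit is $0$.

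This contradicts the standing assumption $\int_{A_{\varepsilon_n}}|\phi_n|^2\,d\rho_\ell\geq\delta$, and the lemma follows. The main obstacle is handling the case of bounded eigenvalues, which has to be dealt with separately because the weak limit of $(\tilde\mu_n)$ is then not a QL in the sense of the paper; fortunately, finite dimensionality of the eigenspaces makes the limit automatically $\rho_\ell$-absolutely continuous, so the Portmanteau argument applies uniformly to both cases.
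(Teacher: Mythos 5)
Your proof is correct, and the overall architecture (contradiction, extracting a weakly convergent subsequence of the measures $|\phi_n|^2\rho_\ell$, exploiting nestedness plus the Portmanteau theorem and continuity from above) matches the paper's. The genuine divergence is in how you dispose of the case of bounded eigenvalues. The paper invokes Assumption \ref{B}: if $\lambda^\ell_{j(\varepsilon_k)}$ stays bounded, then up to extraction the eigenvalue is fixed and \ref{B} gives a uniform $L^\infty$ bound $\Vert\phi_{\varepsilon_k}\Vert_\infty\leq C$, whence $\delta\leq\int_{A_{\varepsilon_k}}|\phi_{\varepsilon_k}|^2\,d\rho_\ell\leq C^2\rho_\ell(A_{\varepsilon_k})\to 0$, an immediate contradiction that rules this case out before any weak limits are taken. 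You instead avoid \ref{B} entirely: discreteness of the spectrum makes the eigenvalue constant along a subsequence, the corresponding eigenspace is finite-dimensional (compact resolvent), compactness of its unit sphere gives $\phi_{n_k}\to\phi_\infty$ in $L^2$, and then the weak limit is $|\phi_\infty|^2\rho_\ell$, which is absolutely continuous since $|\phi_\infty|^2\in L^1(\rho_\ell)$. This lets both cases feed into a single Portmanteau/continuity-from-above argument against $\tilde\mu\ll\rho_\ell$. Your route is marginally cleaner in that the lemma itself no longer needs \ref{B} (only discreteness and finite-dimensional eigenspaces, which already follow from the compact-resolvent hypothesis), though this gain is internal: \ref{B} is a standing assumption in the proof of Theorem~\ref{thm1}\ref{itemii} where the lemma is used. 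Your treatment of the intersection $\bigcap_n A_{\varepsilon_n}$ is also more careful than the paper's phrasing, which tacitly treats it as a singleton $\{x\}$; your version covers the general closed $\rho_\ell$-null set without that simplification.
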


\begin{proof} 
Without loss of generality, we assume that $A_\varepsilon\neq\emptyset$ for every $\varepsilon>0$.
By contradiction, assume that there exists $\delta>0$ such that, for every $\varepsilon> 0$, there exists an eigenfunction $\phi_\varepsilon$ of $\triangle_\ell$, of norm $1$ in $L^2(M_\ell,\rho_\ell)$, associated with the eigenvalue $\lambda^\ell_{j(\varepsilon)}$ (for some $j(\varepsilon)\in\N^*$), such that $\int_{A_\varepsilon} \vert \phi_\varepsilon\vert^2 d\rho_\ell \geq \delta$. 

We claim that $\lambda^\ell_{j(\varepsilon)}\rightarrow+\infty$ as $\varepsilon\rightarrow 0$. Indeed, otherwise, there exists a sequence $\varepsilon_k\rightarrow 0$ such that $(\lambda^\ell_{j(\varepsilon_k)})_{k\in\N^*}$ is bounded, and it follows from \ref{B} that there exists $C>0$ such that $\Vert\phi_{\varepsilon_k}\Vert_{L^\infty(M_\ell,\rho_\ell)}\leq C$ for every $k\in\N^*$. 
We infer that 
$$
C^2\rho_\ell(A_{\varepsilon_k})\geq \rho_\ell(A_{\varepsilon_k}) \Vert\phi_{\varepsilon_k}\Vert_{L^\infty(M_\ell,\rho_\ell)}^2 \geq \int_{A_{\varepsilon_k}} \vert \phi_{\varepsilon_k}\vert^2 d\rho_\ell \geq \delta
$$ 
which contradicts the fact that $\rho_\ell(A_{\varepsilon_k})\rightarrow 0$ as $k\rightarrow +\infty$, and proves the claim.

Since $\lambda^\ell_{j(\varepsilon)}\rightarrow+\infty$, by definition of quantum limits, there exist $\mu \in \QL(M_\ell)$ and a sequence $\varepsilon_k\rightarrow 0$ such that the sequence of probability measures $\vert\phi_{\varepsilon_k}\vert^2 \, \rho_\ell$ converges (weakly) to $\mu$. 
Given any $k\in\N^*$ and $\varepsilon>0$ such that $\varepsilon> \varepsilon_k>0$, since $A_{\varepsilon_k} \subset A_\varepsilon$, we have $\int_{A_\varepsilon} \vert \phi_{\varepsilon_k}\vert^2 d\rho_\ell \geq \delta$.
Since $A_\varepsilon$ is closed, passing to the limit $k\rightarrow+\infty$ and using the Portmanteau  theorem (see Appendix~\ref{cintre}), we infer that $\mu(A_\varepsilon ) \geq \delta$ for every $\varepsilon>0$. Defining $x\in M_\ell$ such that $\{x\} = \cap_{\varepsilon>0} A_\varepsilon$, we thus obtain $\mu(\{x\})\geq \delta$, while $\rho_\ell(\{x\})=0$ because $\rho_\ell(A_\varepsilon)\rightarrow 0$ as $\varepsilon\rightarrow 0$. This contradicts the fact that $\mu$ must be absolutely continuous with respect to $\rho_\ell$ by assumption. The lemma is proved.
\end{proof}

Let $\mu\in\QL(M)$ and let $A$ be a $\rho$-measurable subset $A$ of $M$ such that $\rho(A)=0$. We want to prove that $\mu(A)=0$. Since $\mu$ is regular (as a Borel measure on a compact metric space), we have $\mu ( A ) = \sup \{ \mu (F) \ \mid\ F \subset A , \ F\text{  closed}\}$ and thus, without loss of generality, we assume that $A$ is closed and thus compact. For every $\varepsilon >0$, we define the open subset $\omega_\varepsilon$ and the closed subset $F_\varepsilon$ of $M$ by
$$
\omega_\varepsilon = \{x \in M \ \mid\ d_M(x,A) < \varepsilon\} , \qquad F_\varepsilon = M\setminus \omega_\varepsilon ,
$$
where $d_M$ is an arbitrary distance on $M$ inducing its topology.
Note that $A=\cap_{\varepsilon>0}\omega_\varepsilon=\cap_{\varepsilon>0}\overline{\omega}_\varepsilon$.

We claim that $\rho(\omega_\varepsilon)\rightarrow 0$ as $\varepsilon\rightarrow 0$.
Indeed, since $\rho$ is regular (as a Borel measure on a compact metric space), we have $\rho( A ) = \inf \{ \rho (O) \ \mid\ A \subset O , \ O\text{  open}\}$. Hence, for every $\eta>0$, there exists an open subset $A_\eta$ of $M$ such that $A \subset A_\eta$ and $\rho(A_\eta) \leq \eta$. Since $M \setminus A_\eta$ and $A$ are compact, we have $d_M(A,M \setminus A_\eta)>0$. Taking $0<\varepsilon <d_M(A,M \setminus A_\eta)$, we have $\omega_\varepsilon \subset A_\eta$ and therefore $\rho(\omega_\varepsilon) \leq \rho(A_\eta) \leq \eta$. The claim follows.

Given any $\delta>0$ and $\varepsilon>0$, we define 
$$
\Omega_\varepsilon^\delta = \left\{ x_1 \in M_1 \ \mid\  \g_{M_2}((F_\varepsilon)_{x_1}) \leq 1-\delta \right\} .
$$

\begin{lemma} \label{step2}
For every $\delta>0$, we have $\rho_1( \overline{\Omega}_\varepsilon^\delta ) \rightarrow 0$ as $\varepsilon\rightarrow 0$. 
\end{lemma}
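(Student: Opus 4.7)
The plan is to combine a pointwise application of Lemma~\ref{step1} (for $\rho_1$-a.e.\ $x_1$) with a sandwich argument to handle the closure. First, for each $x_1 \in M_1$, the family $\{\overline{(\omega_\varepsilon)_{x_1}}\}_{\varepsilon > 0}$ of closed subsets of $M_2$ is nonincreasing, and since $A$ is closed one has $A = \bigcap_{\varepsilon > 0} \overline{\omega_\varepsilon}$; because slicing is compatible one-sidedly with closures,
\[
\bigcap_{\varepsilon > 0} \overline{(\omega_\varepsilon)_{x_1}} \ \subset\ \Big(\bigcap_{\varepsilon > 0} \overline{\omega_\varepsilon}\Big)_{x_1} = A_{x_1}.
\]
Since $\rho(A) = 0$, Fubini gives $\rho_2(A_{x_1}) = 0$ for $\rho_1$-a.e.\ $x_1$, and continuity from above of the finite measure $\rho_2$ yields $\rho_2(\overline{(\omega_\varepsilon)_{x_1}}) \to 0$. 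Applying Lemma~\ref{step1} to $(M_2,\triangle_2)$ with the nonincreasing closed family $\overline{(\omega_\varepsilon)_{x_1}}$, and using that $M_2 \setminus \overline{(\omega_\varepsilon)_{x_1}} \subset (F_\varepsilon)_{x_1}$ together with monotonicity of $\g_{M_2}$ for set inclusion, we obtain $\g_{M_2}((F_\varepsilon)_{x_1}) \to 1$ as $\varepsilon\to 0$ for $\rho_1$-a.e.\ $x_1 \in M_1$.

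Second, I would observe that the family $(\Omega_\varepsilon^\delta)_{\varepsilon > 0}$ is nondecreasing in $\varepsilon$ (because $\varepsilon \mapsto \g_{M_2}((F_\varepsilon)_{x_1})$ is nonincreasing), and then prove the key inclusion $\overline{\Omega_\varepsilon^\delta} \subset \Omega_{\varepsilon'}^\delta$ for every $\varepsilon' > \varepsilon$. To do so, I take $x_1 \in \overline{\Omega_\varepsilon^\delta}$ and a sequence $x_1^{(n)} \to x_1$ in $\Omega_\varepsilon^\delta$; by compactness of $M_2$ and continuity of $d_M$ — equivalently, by continuity of $M_1 \to C(M_2,M)$, $x_1 \mapsto (x_2 \mapsto (x_1,x_2))$ — one has $d_M((x_1^{(n)},x_2),(x_1,x_2)) < \varepsilon' - \varepsilon$ uniformly in $x_2 \in M_2$ for $n$ large. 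The triangle inequality then yields $(F_{\varepsilon'})_{x_1} \subset (F_\varepsilon)_{x_1^{(n)}}$, so $\g_{M_2}((F_{\varepsilon'})_{x_1}) \leq \g_{M_2}((F_\varepsilon)_{x_1^{(n)}}) \leq 1 - \delta$, i.e., $x_1 \in \Omega_{\varepsilon'}^\delta$.

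Combining both steps, $\bigcap_{\varepsilon > 0} \overline{\Omega_\varepsilon^\delta} \subset \bigcap_{\varepsilon > 0} \Omega_{2\varepsilon}^\delta = \bigcap_{\varepsilon' > 0} \Omega_{\varepsilon'}^\delta \subset \{x_1 \in M_1 : \g_{M_2}((F_\varepsilon)_{x_1}) \not\to 1 \text{ as } \varepsilon \to 0\}$, which is $\rho_1$-negligible by the first step. Since the family $\overline{\Omega_\varepsilon^\delta}$ of closed sets is nonincreasing as $\varepsilon \downarrow 0$ and $\rho_1$ is finite, continuity of $\rho_1$ from above gives $\rho_1(\overline{\Omega_\varepsilon^\delta}) \to 0$. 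The main obstacle is exactly this closure step: pointwise convergence only yields $\rho_1(\Omega_\varepsilon^\delta) \to 0$, and upgrading to the closure requires uniform control in $x_2$ of the slice distance $d_M((x_1^{(n)},x_2),(x_1,x_2))$, which is precisely where the compactness of $M_2$ enters.
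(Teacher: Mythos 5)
Your proof is correct and follows essentially the same strategy as the paper's: Fubini plus Lemma~\ref{step1} to get the $\rho_1$-a.e.\ slice convergence $\g_{M_2}((F_\varepsilon)_{x_1})\to 1$, and a slice-distance comparison $(F_{\varepsilon'})_{x_1}\subset (F_\varepsilon)_{x_1^{(n)}}$ to pass from $\Omega_\varepsilon^\delta$ to its closure, finishing by continuity from above. You are in fact slightly more explicit than the paper about the role of compactness of $M_2$ in making the estimate $d_M((x_1^{(n)},x_2),(x_1,x_2))<\varepsilon'-\varepsilon$ uniform in $x_2$; the paper's ``$k$ large enough'' glosses over exactly this point.
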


\begin{proof}
Since the family $(F_\varepsilon)_{\varepsilon>0}$ is nonincreasing for the inclusion with respect to $\varepsilon$, the family $(\Omega_\varepsilon^\delta)_{\varepsilon>0}$, and thus as well $(\overline\Omega_\varepsilon^\delta)_{\varepsilon>0}$, is nondecreasing for the inclusion, i.e., $\Omega_{\varepsilon'}^\delta \subset \Omega_\varepsilon^\delta$ if $0<\varepsilon'<\varepsilon$. 

We claim that $\cap_{\varepsilon >0} \overline{\Omega}_\varepsilon^\delta = \cap_{\varepsilon >0} \Omega_\varepsilon^\delta$.
Indeed, let $x_1 \in \cap_{\varepsilon > 0} \overline{\Omega}_\varepsilon^\delta$. 
Given any $0<\varepsilon' <\varepsilon$, the point $x_1 \in \overline{\Omega}_{\varepsilon'}$ is the limit in $M_1$ of a sequence of points $z_k \in \Omega_{\varepsilon'}^\delta$, which satisfy, by definition, $\g_{M_2}((F_{\varepsilon'})_{z_k}) \leq 1-\delta$.
We have $(F_\varepsilon)_{x_1} \subset (F_{\varepsilon'})_{z_k}$ whenever $k$ is large enough (indeed, if $(x_1,x_2)\in F_\varepsilon$ then $d_M((x_1,x_2),A)\geq\varepsilon>\varepsilon'$ and thus $d_M((z_k,x_2),A)\geq\varepsilon'$ for $k$ large enough, i.e., $(z_k,x_2)\in F_{\varepsilon'}$), hence $\g_{M_2}((F_{\varepsilon})_{x_1})  \leq \g_{M_2}((F_{\varepsilon'})_{z_k}) \leq 1 - \delta$
and thus $x_1 \in \Omega_\varepsilon^\delta$. The claim is proved.

Then, to prove the lemma, equivalently, let us prove that $\rho_1( \cap_{\varepsilon >0} \Omega_\varepsilon^\delta ) = 0$.

Let $x_1 \in \cap_{\varepsilon >0} \Omega_\varepsilon^\delta$ be arbitrary. For every $\varepsilon>0$, by definition, we have $\g_{M_2}((F_\varepsilon)_{x_1} ) \leq 1-\delta$. 
Since $(F_\varepsilon)_{x_1}=M_2 \setminus (\omega_\varepsilon)_{x_1}$ and $(\mathring{F}_\varepsilon)_{x_1}=M_2 \setminus (\overline{\omega}_\varepsilon)_{x_1}$, 
the closed subset $A_\varepsilon = (\overline{\omega}_{\varepsilon})_{x_1}$ of $M_2$ is such that $M_2 \setminus A_\varepsilon = (\mathring{F}_\varepsilon)_{x_1} \subset (F_\varepsilon)_{x_1}$, hence $\g_{M_2} (M_2 \setminus A_\varepsilon) \leq  \g_{M_2} ((F_\varepsilon)_{x_1}) \leq 1- \delta$.
Since $(A_\varepsilon)_{\varepsilon>0}$ is a nonincreasing family of nested closed sets in $M_2$, it follows from Lemma~\ref{step1}, by contraposition, that $\lim_{\varepsilon \rightarrow 0} \rho_2(A_\varepsilon) = \rho_2(\cap_{\varepsilon >0} A_\varepsilon) >0$.
%
It is easy to see that
$$
\bigcap_{\varepsilon>0} A_\varepsilon = \bigcap_{\varepsilon >0} (\overline{\omega}_\varepsilon)_{x_1} = \left( \bigcap_{\varepsilon >0} \overline{\omega}_\varepsilon \right)_{x_1} 
= A_{x_1} = \{x_2 \in M_2 \ \mid\  (x_1,x_2) \in A \}. 
$$
We have therefore obtained that $\rho_2(A_{x_1}) >0$ for every $x_1 \in \cap_{\varepsilon >0} \Omega_\varepsilon^\delta$. 
Since
$$
0 = \rho(A) = \int_{M_1} \rho_2(A_{x_1})\, d\rho_1(x_1) ,
$$
we must have $\rho_2(A_{x_1})=0$ for $\rho_1$-almost every $x_1\in M_1$. Therefore $\rho_1( \cap_{\varepsilon >0} \Omega_\varepsilon^\delta ) = 0$. The lemma is proved.
\end{proof}

\begin{lemma} \label{step3} 
We have $\g_M(F_\varepsilon) \rightarrow 1$ as $\varepsilon \rightarrow 0$.
\end{lemma}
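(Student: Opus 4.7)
The plan is to combine Proposition \ref{main2} with Lemma \ref{step1} applied on $M_1$ (not on $M_2$!), using Lemma \ref{step2} as the input that guarantees the hypothesis of Lemma \ref{step1} is met.

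First I would apply Proposition \ref{main2} to the measurable subset $F_\varepsilon$ of $M=M_1\times M_2$, which, under \ref{MM}, gives
$$
\g_M(F_\varepsilon)\ \geq\ \g_{M_1}\bigl(x_1\mapsto\g_{M_2}((F_\varepsilon)_{x_1})\bigr).
$$
Next, fix $\delta>0$. By the very definition of $\Omega_\varepsilon^\delta$, for every $x_1\in M_1\setminus\Omega_\varepsilon^\delta$ (and in particular for every $x_1\in M_1\setminus\overline{\Omega}_\varepsilon^\delta$) we have $\g_{M_2}((F_\varepsilon)_{x_1})>1-\delta$. Therefore the function $x_1\mapsto\g_{M_2}((F_\varepsilon)_{x_1})$ is bounded below pointwise by $(1-\delta)\mathds{1}_{M_1\setminus\overline{\Omega}_\varepsilon^\delta}$. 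Monotonicity and positive homogeneity of $\g_{M_1}$ (both immediate from the definition \eqref{defg}) yield
$$
\g_{M_1}\bigl(x_1\mapsto\g_{M_2}((F_\varepsilon)_{x_1})\bigr)\ \geq\ (1-\delta)\,\g_{M_1}\bigl(M_1\setminus\overline{\Omega}_\varepsilon^\delta\bigr).
$$

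Now I would apply Lemma \ref{step1} on $M_1$ to the family $(A_\varepsilon)_{\varepsilon>0}=(\overline{\Omega}_\varepsilon^\delta)_{\varepsilon>0}$. This family consists of closed subsets of $M_1$, and the proof of Lemma \ref{step2} already established that it is nondecreasing as $\varepsilon$ grows (equivalently, nested in the sense $A_\varepsilon\subset A_{\varepsilon'}$ for $0<\varepsilon<\varepsilon'$); moreover Lemma \ref{step2} provides the remaining hypothesis $\rho_1(\overline{\Omega}_\varepsilon^\delta)\to 0$. Lemma \ref{step1} then yields $\g_{M_1}(M_1\setminus\overline{\Omega}_\varepsilon^\delta)\to 1$ as $\varepsilon\to 0$. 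Combining with the chain of inequalities above gives
$$
\liminf_{\varepsilon\to 0}\g_M(F_\varepsilon)\ \geq\ 1-\delta,
$$
and since $\delta>0$ is arbitrary, $\liminf_{\varepsilon\to 0}\g_M(F_\varepsilon)\geq 1$. The reverse bound $\g_M(F_\varepsilon)\leq 1$ is immediate from $\mathds{1}_{F_\varepsilon}\leq 1$ and $\|\phi\|_{L^2}=1$ in \eqref{defg}, so the limit equals $1$, as claimed.

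There is no serious obstacle here; everything has been set up by Proposition \ref{main2} and Lemmas \ref{step1}--\ref{step2}. The only point that deserves a word is measurability of $x_1\mapsto\g_{M_2}((F_\varepsilon)_{x_1})$, required to feed it into $\g_{M_1}$: on each finite-dimensional eigenspace of $\triangle_2$ this map is the smallest eigenvalue of a Hermitian matrix whose entries $x_1\mapsto \int_{(F_\varepsilon)_{x_1}}\overline{\phi^2_{j,k}}\,\phi^2_{j,l}\,d\rho_2$ are measurable by Fubini, and the countable infimum over $j$ preserves measurability.
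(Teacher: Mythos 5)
Your proof is correct and follows essentially the same route as the paper's: apply Proposition \ref{main2}, lower-bound the inner function by $(1-\delta)\mathds{1}_{M_1\setminus\overline{\Omega}_\varepsilon^\delta}$ using the definition of $\Omega_\varepsilon^\delta$, and then combine Lemmas \ref{step1} and \ref{step2} (applied on $M_1$ to the nested closed family $(\overline{\Omega}_\varepsilon^\delta)_{\varepsilon>0}$) to send $\g_{M_1}(M_1\setminus\overline{\Omega}_\varepsilon^\delta)\to 1$, and finally let $\delta\to 0$. The only difference is cosmetic (you invoke monotonicity and positive homogeneity of $\g_{M_1}$ abstractly where the paper writes out the integrals), and your closing remark on measurability of $x_1\mapsto\g_{M_2}((F_\varepsilon)_{x_1})$ is a correct and welcome addition that the paper leaves implicit.
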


\begin{proof}
By Assumption \ref{MM} and Proposition~\ref{main2}, it suffices to prove that 
$$
\lim_{\varepsilon \rightarrow 0} \g_{M_1}(x_1\mapsto\g_{M_2}((F_\varepsilon)_{x_1})) =1 .
$$
Noting that, by definition of $\Omega_\varepsilon^\delta$, $ \g_{M_2}((F_\varepsilon)_{x_1}) \geq 1 -\delta$ for every $x_1\in M_1 \setminus \overline{\Omega}_\varepsilon^\delta$, we have
\begin{equation*}
\begin{split}
\g_{M_1}(x_1\mapsto \g_{M_2}((F_\varepsilon)_{x_1})) 
&= \inf_{\phi\in\mathcal{E}_1} \int_{M_1} \vert \phi(x_1)\vert^2\, \g_{M_2}((F_\varepsilon)_{x_1}) \, d\rho_1(x_1) \\
&\geq \inf_{\phi\in\mathcal{E}_1} \int_{M_1\setminus \overline{\Omega}_\varepsilon^\delta} \vert \phi(x_1)\vert^2\, \g_{M_2}((F_\varepsilon)_{x_1}) \, d\rho_1(x_1) \\
& \geq (1-\delta) \inf_{\phi\in\mathcal{E}_1} \int_{M_1\setminus \overline{\Omega}_\varepsilon^\delta} \vert \phi(x_1)\vert^2 d\rho_1(x_1) 
= (1- \delta) \g_{M_1}(M_1 \setminus \overline{\Omega}_\varepsilon^\delta) .
\end{split}
\end{equation*}
By Lemmas~\ref{step1} and \ref{step2}, we have $\g_{M_1}(M_1 \setminus \overline{\Omega}_\varepsilon^\delta)\rightarrow 1$ as $\varepsilon \rightarrow 0$, and thus 
$$
\liminf_{\varepsilon \rightarrow 0} \g_{M_1}(x_1\mapsto \g_{M_2}((F_\varepsilon)_{x_1}))  \geq 1-\delta .
$$
Since $\delta>0$ is arbitrary, the lemma follows. 
\end{proof}

Let us finally prove that $\mu(A)=0$.
Since $F_\varepsilon$ is closed, it follows from Lemma \ref{lemggprime} that $\g_M(F_\varepsilon)\leq \g'_M(F_\varepsilon)$. Besides, by the definition \eqref{defgprime} of $\g'_M$, we have $\g'_M(F_\varepsilon) \leq \mu(F_\varepsilon)$. Using Lemma~\ref{step3} we infer that $\mu(F_\varepsilon)\rightarrow 1$ as $\varepsilon \rightarrow 0$ and thus, since $\omega_\varepsilon = M \setminus F_\varepsilon$, $\mu(\omega_\varepsilon)\rightarrow 0$ as $\varepsilon \rightarrow 0$. Since $A=\cap_{\varepsilon>0}\omega_\varepsilon$, we conclude that $\mu(A)=0$.  


\section{Proof of Theorem \ref{thm2}}\label{sec_proof2}
Let $\tilde\mu$ be a microlocal QL of $\triangle$ and let $\mu=\pi_*\tilde\mu\in\QL(M)$.

\subsection{Proof of \ref{itemiii}}
\paragraph{Preliminary remark.}
The operators $(\triangle_1)_{x_1} = \triangle_1\otimes\mathrm{id}$ and $(\triangle_2)_{x_2} = \mathrm{id}\otimes\triangle_2$ have a joint spectrum.
Moreover, thanks to \ref{MM} we have the following much stronger spectral property: any eigenfunction of $\triangle=(\triangle_1)_{x_1}+(\triangle_2)_{x_2}$ is actually an eigenfunction of both operators $(\triangle_1)_{x_1}$ and $(\triangle_2)_{x_2}$; more precisely, any eigenvalue $\lambda$ of $\triangle$ is written as $\lambda = \lambda^1 + \lambda^2$ where $\lambda^\ell$ is an eigenvalue of $\triangle_\ell$ on $M_\ell$, for $\ell=1,2$, this decomposition being unique by \ref{MM}, and
\begin{equation}\label{eigenfell}
(\triangle_\ell)_{x_\ell} \phi_\lambda = \lambda^\ell \phi_\lambda,\qquad \ell=1,2,
\end{equation}
for every eigenfunction $\phi_\lambda$ of $\triangle=(\triangle_1)_{x_1}+(\triangle_2)_{x_2}$ corresponding to the eigenvalue $\lambda = \lambda^1 + \lambda^2$. This can be seen by considering the tensorized Hilbert basis as at the beginning of Proposition \ref{main2}.

\paragraph{Invariance under the horizontal and vertical Hamiltonian flows.}
The above spectral property implies the invariance of any microlocal QL of $\triangle$ under both the horizontal and vertical Hamiltonian flows. The argument is quite classical (see \cite{CdV_Duke1979,Toth}) and goes as follows.

By definition of a QL, there exists a sequence of eigenvalues $\lambda$ of $\triangle$ (we do not write sequences to keep readable notations), with associated normalized eigenfunctions $\phi_\lambda$, such that $\tilde\mu_{\phi_\lambda}$ converges weakly to $\tilde\mu$ as $\lambda\rightarrow+\infty$, where $\tilde\mu_{\phi_\lambda}(a) = \langle\Op(a)\phi_\lambda,\phi_\lambda\rangle$ for every classical symbol $a$ of order $0$ and for an arbitrary quantization $\Op$.

It is already known, as a consequence of the (infinitesimal) Egorov theorem (see, e.g., \cite{Zworski}) that $\tilde\mu$ is invariant under the Hamiltonian flow of $S^*M$, i.e., $\tilde\mu( \{h,a\} ) = 0$ for every $a\in C^\infty(S^*M)$, 
where $h$ is the principal symbol (of order $d$) of $\triangle$. 
Let us anyway recall the argument. Let $A$ be a (classical) pseudodifferential operator of order $0$, with principal symbol $a$, microlocally supported away of the characteristic variety $\Sigma=\{h=0\}$ of $\triangle$. Since $\triangle\phi_\lambda = \lambda\phi_\lambda$, we have
\begin{equation}\label{1surd}
\big\langle [ \triangle^{1/d}, A ] \phi_\lambda, \phi_\lambda \big\rangle_{L^2(M,\rho)} = 0  .
\end{equation}
On the microlocal support of $A$ (thus, outside of $\Sigma$), $\triangle$ is elliptic (and nonnegative) and thus $\triangle^{1/d}$ is a pseudodifferential operator of order $1$, with principal symbol $h^{1/d}$ (see, e.g., \cite[Cor. 9]{HassellVasy}). Taking the limit $\lambda\rightarrow+\infty$ in \eqref{1surd}, we obtain $\tilde\mu( \{h,a\} ) = 0$, whence the invariance of $\tilde\mu$ under the Hamiltonian flow (note that the Hamiltonian flow is trivial in $\Sigma$).

Let us now prove that, thanks to \ref{MM}, $\tilde\mu$ is also invariant under the horizontal and the vertical Hamiltonian flows.
%
Recall that $h_\ell\geq 0$ is the principal symbol (of order $d$) of $\triangle_\ell$, for $\ell=1,2$, and that $h=h_1+h_2$.
Let $A$ be a pseudodifferential operator of order $0$, with principal symbol $a$, microlocally supported away of $\Sigma$.
As above, on the microlocal support of $A$,
$\triangle^{-(d-1)/d}$ is a pseudodifferential operator of order $1-d$, with principal operator $h^{-(d-1)/d}$.
Then, for $\ell=1,2$, the operator $\triangle^{-(d-1)/d} (\triangle_\ell)_{x_\ell}$ is a pseudodifferential operator of order $1$, with principal symbol $\frac{1}{i} \big\{\frac{h_\ell}{h^{(d-1)/d}},a\big\}$, and we have
$$
\big\langle [ \triangle^{-(d-1)/d} (\triangle_\ell)_{x_\ell}, A ] \phi_\lambda, \phi_\lambda \big\rangle_{L^2(M,\rho)} = 0  
, \qquad \ell=1,2.
$$
Now, taking the limit $\lambda\rightarrow+\infty$ yields $\tilde\mu\big( \big\{\frac{h_\ell}{h^{(d-1)/d}},a\big\} \big) = 0$, and since we already know that $\tilde\mu( \{h,a\} ) = 0$, we obtain that $\tilde\mu( \{h_\ell,a\} ) = 0$, for $\ell=1,2$, and thus $\tilde\mu$ is invariant under both the horizontal and vertical Hamiltonian flows.

\begin{remark}\label{rem_action}
Actually, the above argument shows that $\tilde\mu$ is invariant under the Hamiltonian flow of any convex combination of $h_1$ and $h_2$ (viewed as smooth functions on $T^*M$), thus, under an action of the quadrant $[0,+\infty)^2$, and horizontal and vertical Hamiltonian flows correspond to the boundary of this quadrant. The action is free inside the quadrant but not at the boundary.
\end{remark}

\begin{remark}[$\tilde\mu$ does not charge any non-horizontal and non-vertical Hamiltonian curve]\label{rem_nonhornonvert}
Given any Hamiltonian curve 
$\tilde\gamma(\cdot)=(\tilde\gamma_1(\cdot),\tilde\gamma_2(\cdot))$ 
that is neither horizontal nor vertical, 
we claim that $\tilde\mu(\tilde\gamma(\R))=0$; equivalently (by Lemma \ref{lemraynul} in Appendix \ref{app:measures}), $\mu(\gamma(\R))=0$, where $\gamma=\pi\circ\tilde\gamma$.

Indeed, by contradiction, if $\tilde\mu(\tilde\gamma((0,T)))>0$, since $\tilde\mu$ is invariant under both the horizontal and vertical Hamiltonian flows, it would follow that $\tilde\mu$ has an infinite mass, which contradicts the fact that $\tilde\mu$ is a probability measure. 

%
%
%

Interestingly, as already alluded at the end of Section \ref{sec_mainresults}, the above argument does not work if the Hamiltonian curve $\tilde\gamma$ is horizontal or vertical, i.e., if $h_1(\tilde\gamma)=0$ or $h_2(\tilde\gamma)=0$.
Actually, the action mentioned in Remark \ref{rem_action} is free in the interior of the quadrant but not at the boundary.
Therefore, the fact that a microlocal QL cannot charge any horizontal or vertical Hamiltonian curve, as claimed in Item \ref{itemiv}, has to be proved in another way. This is what we do in Section \ref{sec_thm2_2} with a different proof.
\end{remark}

Before coming to that point, we provide in the next section two additional properties, which are not reported in Theorem \ref{thm2} but may be of interest for other purposes.

\subsection{Additional properties}\label{sec_additional}
Since $\lambda=\lambda^1+\lambda^2\rightarrow+\infty$, either $\lambda^1$ and $\lambda^2$ tend to $+\infty$, or only one of the two sequences tends to $+\infty$ while the other remains bounded.

Let us assume, without loss of generality, that $\lambda^1=\lambda^1_i$ remains constant (i.e., $i$ is fixed) and that $\lambda^2=\lambda^2_j\rightarrow+\infty$ as $j\rightarrow+\infty$. We have $\lambda_j=\lambda^1+\lambda^2_j$. 
In this case, let us prove that
\begin{itemize}
\item $(p_1)_*\mu$ is absolutely continuous with respect to $\rho_1$,
\item $(p_2)_*\mu$ a (finite) convex combination of local QLs of $M_2$,
\end{itemize}
where 
$p_\ell:M=M_1\times M_2\rightarrow M_\ell$ is the canonical projection, for $\ell=1,2$.

Since the sequence of probability measures $\vert\phi_j\vert^2\rho$ converges weakly to $\mu=\pi_*\tilde\mu$, it follows that $(p_\ell)_* \vert\phi_j\vert^2\rho$ converges weakly to the measure $(p_\ell)_*\mu$, for $\ell=1,2$.

Let us first prove that $(p_1)_*\mu$ is absolutely continuous with respect to $\rho_1$.
By the Fubini theorem, for every $a\in C^\infty(M_1)$, we have
$$
\left\langle (p_1)_* \vert\phi_j\vert^2\rho, a \right\rangle = \int_{M_1} a(x_1) \int_{M_2} \vert\phi_j(x_1,x_2)\vert^2\, d\rho_2(x_2)\, d\rho_1(x_1) .
$$
Using \ref{MM}, using the Hilbert basis defined by \eqref{Hilbertbasis}, for every $j$ we have
\begin{equation}\label{phi_j}
\phi_j = \sum_{k=1}^{m_i^1} \ \sum_{l=1}^{m_j^2} a_{i,k,j,l}\, \phi^1_{i,k}\, \phi^2_{j,l}, \qquad \sum_{k=1}^{m_i^1} \ \sum_{l=1}^{m_j^2} \vert a_{i,k,j,l}\vert^2 = 1,
\end{equation}
and thus
$$
\vert \phi_j\vert^2 = \sum_{k,k'=1}^{m_i^1} \ \sum_{l,l'=1}^{m_j^2} a_{i,k,j,l}\,\overline{a_{i,k',j,l'}}\, \phi^1_{i,k}\, \overline{\phi^1_{i,k'}}\, \phi^2_{j,l}\, \overline{\phi^2_{j,l'}} .
$$
When integrating $\vert \phi_j\vert^2$ over $M_2$, the cross product terms $\phi^2_{j,l}\, \overline{\phi^2_{j,l'}}$ vanish when $l\neq l'$ and in the sum it remains only the terms for which $l=l'$. Hence, we obtain
$$
\left\langle (p_1)_* \vert\phi_j\vert^2\rho, a \right\rangle = \sum_{k,k'=1}^{m_i^1} b_{k,k',j} \int_{M_1} a(x_1)\, \phi^1_{i,k} (x_1) \, \overline{\phi^1_{i,k'}(x_1)} \, d\rho_1(x_1) 
$$
where
$$
b_{k,k',j} = \sum_{l=1}^{m_j^2} a_{i,k,j,l}\, \overline{a_{i,k',j,l}}. 
$$
Since there is a finite number integers $k,k'$ in $\{1,\ldots,m^1_i\}$ (recall that $i$ is fixed), and since the sequence $(b_{k,k',j})_{j\in\N}$ is bounded, up to some subsequence we have $b_{k,k',j}\rightarrow \beta_{k,k'}$ as $j\rightarrow+\infty$, for all those integers $k,k'$. Hence
$$
\left\langle (p_1)_* \vert\phi_j\vert^2\rho, a \right\rangle \longrightarrow \sum_{k,k'=1}^{m_i^1} \beta_{k,k'} \int_{M_1} a(x_1)\, \phi^1_{i,k} (x_1) \, \overline{\phi^1_{i,k'}(x_1)} \, d\rho_1(x_1) 
$$
as $j\rightarrow+\infty$, and therefore the measure $(p_1)_*\mu$ is absolutely continuous, of density
$$
\frac{d (p_1)_*\mu}{d\rho_1}(x_1) = \sum_{k,k'=1}^{m_i^1} \beta_{k,k'} \, \phi^1_{i,k} (x_1) \, \overline{\phi^1_{i,k'} (x_1)}  \qquad \forall x_1\in M_1 .
$$

\medskip

Let us then prove that $(p_2)_*\mu$ a (finite) convex combination of QLs of $M_2$. By \eqref{phi_j}, for every $j$ we have
$$
\phi_j = \sum_{k=1}^{m^1_i} \phi^1_{i,k} \psi_{k,j}
\qquad\textrm{where}\qquad
\psi_{k,j} = \sum_{l=1}^{m^2_j} a_{i,k,j,l} \phi^2_{j,l}
$$
(we do not put an index $i$ in $\psi_{k,j}$ because $i$ is fixed) is an eigenfunction of $\triangle_2$, of norm
$$
c_k^j = \Vert \psi_{k,j} \Vert_{L^2(M_2,\rho_2)} = \left(\sum_{l=1}^{m^2_j} \vert a_{i,k,j,l}\vert^2 \right)^{1/2} .
$$
By orthogonality of the $\phi^1_{i,k}$, we have
$$
\int_{M_1} \vert\phi_j(x_1,x_2)\vert^2\, d\rho(x_1) = \sum_{k=1}^{m^1_i} \vert\psi_{k,j}(x_2)\vert^2 ,
$$
hence
\begin{equation}\label{14:06}
(p_2)_* \vert\phi_j\vert^2\,\rho = \left( \int_{M_1} \vert\phi_j(x_1,x_2)\vert^2\, d\rho(x_1) \right) \rho_2 = \sum_{k=1}^{m^1_i} \left(c^j_k\right)^2 \left\vert\frac{\psi_{k,j}}{c^j_k}\right\vert^2 \rho_2
\end{equation}
For every $j$, we define the vector $c^j\in\R^{m^1_i}$ whose coordinates are the nonnegative real numbers $c_k^j$, for $k=1,\ldots,m^1_i$. By \eqref{phi_j}, the Euclidean norm of $c^j$ is equal to $1$. Therefore, by compactness (recall that $i$ is fixed), taking a subsequence if necessary, the sequence $(c^j)_{j\in\N^*}$ converges to some $c\in\R^{m^1_i}$, of coordinates $c_k$, for $k=1,\ldots,m^1_i$, such that $\sum_{k=1}^{m^1_i} c_k^2=1$. 

Besides, since $\frac{\psi_{k,j}}{c^j_k}$ is a normalized eigenfunction of $\triangle_2$, up to some subsequence there exists $\tilde\mu^2_k\in\QL(M_2)$ such that the sequence of probability measures $\Big\vert\frac{\psi_{k,j}}{c^j_k}\Big\vert^2 \rho_2$ converges weakly to $\tilde\mu^2_k$, for $k=1,\ldots,m^1_i$.

Passing to the limit in \eqref{14:06}, we finally obtain that
$(p_2)_*\mu = \sum_{k=1}^{m^1_i} c_k^2 \tilde\mu^2_k$,
i.e., $(p_2)_*\mu$ is a finite convex combination of local QLs of $M_2$. In particular, it is invariant under the Hamiltonian flow of $M_2$.

\subsection{Proof of \ref{itemiv}}\label{sec_thm2_2}
Let $\tilde\gamma$ be a nontrivial Hamiltonian curve on the submanifold $\{h=1\}$ of $T^*M$ (which is identified with an open subset of $S^*M$) and let $\gamma=\pi\circ\tilde\gamma=(\gamma_1,\gamma_2)$ on $M=M_1\times M_2$.
Let $\tilde\mu \in \QL(S^*M)$ and let $\mu=\pi_*\tilde\mu$. Let us prove that, under \ref{MM}, we have $\mu(\gamma(\R))=0$.
As noted in Remark \ref{rem_nonhornonvert}, it suffices to prove this fact for $\gamma$ horizontal or vertical. But the proof that we give hereafter is general.

Since the functions $t\mapsto h_1(\tilde\gamma(t))$ and $t\mapsto h_2(\tilde\gamma(t))$ are constant, at least one of the two constants is positive. In the sequel, without loss of generality we assume that $h_1(\tilde\gamma(t))=\mathrm{Cst}>0$ for every $t\in\R$ (this is the case if $\tilde\gamma$ is horizontal, i.e., $h_2(\tilde\gamma)=0$).

%
%

\begin{lemma}\label{lemT}
In the framework of Theorem \ref{thm2}, or of Remark \ref{rem_boundary} in the Riemannian case with boundary, 
assume that $\gamma([0,T])$ is contained in the interior of $M$ (i.e., does not intersect the boundary) for some $T>0$. Then $\mu(\gamma([0,T]))=0$. 
\end{lemma}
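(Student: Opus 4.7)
The plan is to prove $\mu(\gamma([0,T])) = 0$ by an outer-regularity argument: exhibit a shrinking family of open tubular neighborhoods $V_\varepsilon \supset \gamma([0,T])$ with $\bigcap_\varepsilon V_\varepsilon = \gamma([0,T])$ and show $\mu(V_\varepsilon) \to 0$ as $\varepsilon \to 0$. Since $\mu$ is regular on a compact metric space, this is equivalent to the claim.

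First, by compactness of $[0,T]$ and subadditivity of $\mu$, subdivide $[0,T]$ into finitely many short subintervals on each of which $\gamma$ sits in a single coordinate chart in the interior of $M$ and $\gamma_1$ is a smooth embedding of that subinterval into $M_1$; the embedding property follows from $h_1(\tilde\gamma) = c_1 > 0$, which forces $\dot\gamma_1 \neq 0$. It suffices to handle one such short piece, relabelled $[0,T]$. Choose $V_\varepsilon$ to be a tube of radius $\varepsilon$ around $\gamma([0,T])$, arranged so that the vertical slice $(V_\varepsilon)_{x_1}$ is empty when $x_1$ lies further than $\varepsilon$ from $\gamma_1([0,T])$ and is a ball in $M_2$ of radius $O(\varepsilon)$ centered at the unique point $\gamma_2(t(x_1))$ above $x_1$ otherwise.

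Invoking \ref{MM}, as in the preliminary remark of Section~\ref{sec_proof2}, every eigenfunction splits as $\phi_\lambda = \sum_{k,l} a_{k,l}\, \phi^1_{i,k} \otimes \phi^2_{j,l}$ with $\sum_{k,l}|a_{k,l}|^2 = 1$, and $\phi_\lambda(x_1,\cdot)$ is itself an eigenfunction of $\triangle_2$ for each fixed $x_1$. Following the Fubini scheme of Proposition~\ref{main2},
\begin{equation*}
\int_{V_\varepsilon} |\phi_\lambda|^2\, d\rho = \int_{M_1} \left(\int_{(V_\varepsilon)_{x_1}} |\phi_\lambda(x_1, \cdot)|^2\, d\rho_2\right) d\rho_1(x_1).
\end{equation*}
I would then control the inner integral uniformly in $\lambda$ by exploiting that $(V_\varepsilon)_{x_1}$ has $\rho_2$-measure vanishing uniformly in $x_1$, combined with \ref{B} applied to the tensor factors $\phi^2_{j,l}$ and with Cauchy--Schwarz on the coefficients $a_{k,l}$ (thanks to the unit-sum constraint). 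Integrating over $M_1$, passing to the limit along the subsequence of eigenfunctions defining $\tilde\mu$, and using $\mu = \pi_*\tilde\mu$, yields $\mu(V_\varepsilon) \leq \eta(\varepsilon)$ with $\eta(\varepsilon) \to 0$, which completes the argument.

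The main obstacle I anticipate is the horizontal case $h_2(\tilde\gamma) = 0$, i.e.\ $\gamma_2 \equiv y_0$ constant. As emphasized in Remark~\ref{rem_nonhornonvert}, the invariance of $\tilde\mu$ under the horizontal flow (Item~\ref{itemiii}) is trivial along $\tilde\gamma$ in this case, so one cannot spread mass away from $\tilde\gamma$ via flow invariance; moreover, \ref{B} is only a pointwise statement and a priori fails to deliver uniform-in-$\lambda$ $L^\infty$ bounds across the tensor factors. Making the slice estimate rigorous in this degenerate situation will likely require a spectral-functional refinement in the spirit of Lemma~\ref{step1}, converting pointwise boundedness of the factor eigenfunctions $\phi^2_{j,l}$ near $y_0$ into the uniform-in-$\lambda$ control of $\int_{(V_\varepsilon)_{x_1}} |\phi_\lambda(x_1,\cdot)|^2 d\rho_2$ that the Fubini argument demands.
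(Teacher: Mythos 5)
Your skeleton is the paper's: shrink a tubular neighborhood onto $\gamma([0,T])$, use $h_1(\tilde\gamma)=\mathrm{Cst}>0$ to make $\gamma_1$ locally injective so that each vertical slice of the tube is contained in a bounded number $N$ of $\varepsilon$-balls of $M_2$, and run the Fubini/tensor decomposition of Proposition~\ref{main2} (this is exactly where \ref{MM} enters) to reduce everything to a slice estimate on $M_2$ that must be uniform both in the base point $x_1$ and in the eigenvalue. The gap is that you do not prove that slice estimate, and the tools you propose for it cannot work. Assumption \ref{B} is explicitly non-uniform in the eigenvalue, so the $L^\infty$ norms of the factors $\phi^2_{j,l}$ may blow up as $j\to\infty$, and smallness of $\rho_2\bigl((V_\varepsilon)_{x_1}\bigr)$ alone says nothing against high-frequency concentration (on the round sphere, zonal harmonics carry a definite fraction of their mass on shrinking caps around the poles). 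Your fallback, a ``spectral-functional refinement in the spirit of Lemma~\ref{step1}'', is also unavailable: Lemma~\ref{step1} is proved under the hypothesis that every local QL of $M_2$ is absolutely continuous, which is an assumption of Item~\ref{itemii} of Theorem~\ref{thm1} but \emph{not} of Theorem~\ref{thm2}, where only \ref{MM} and the differential-operator structure are assumed.

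What is actually needed, and what the paper proves as its equation \eqref{m0734}, is the statement that $\g_{M_2}\bigl(M_2\setminus\bigcup_{k=1}^N B_2(x_2^k,\varepsilon)\bigr)\to 1$ as $\varepsilon\to0$, uniformly over all choices of the $N$ centers: no normalized eigenfunction of $\triangle_2$ can hold a fixed fraction of its mass on a union of $N$ arbitrarily small balls. The proof is by compactness and contradiction: a failing sequence of eigenfunctions would produce, after extracting limits of the centers and of the measures $\vert\phi_{\varepsilon_j}\vert^2\rho_2$, a local quantum limit $\mu_2\in\QL(M_2)$ with an atom $\mu_2(\{x_2\})>0$; lifting $\mu_2$ to a microlocal QL $\tilde\mu_2$ on $S^*M_2$, which is invariant under the Hamiltonian flow of $h_2$ by Egorov's theorem, the atom propagates along the flow and forces $\tilde\mu_2$ to have infinite mass, contradicting that it is a probability measure. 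This dynamical ``no atoms in $\QL(M_2)$'' input is the missing idea; it replaces the $L^\infty$/measure-theoretic bound you were aiming for, and it is precisely what makes the horizontal case (where Item~\ref{itemiii} gives nothing) go through.
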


\begin{proof}
For every $\varepsilon>0$, we define the open subset $\omega_\varepsilon$ and the closed subset $F_\varepsilon$ of $M$ by
$$
\omega_\varepsilon =\{x\in M\ \mid\  d_M(x,\gamma([0,T]))<\varepsilon\}, \qquad F_\varepsilon = M\setminus \omega_\varepsilon,
$$
where $d_M$ is an arbitrary distance on $M$ inducing its topology.
Since $h_1(\tilde\gamma(t))=\mathrm{Cst}>0$ and since $h_1$ is homogeneous of degree $d$, we have $\dot\gamma_1=\frac{\partial h_1}{\partial\xi_1}\neq 0$ along $\tilde\gamma$, hence
there exists $\eta>0$ such that, for every $t\in [\eta,T-\eta]$, the curve $\gamma(\cdot)$, restricted to $[t-\eta,t+\eta]$, intersects the vertical set $\{\gamma_1(t)\}\times M_2$ only at the point $\gamma(t)$.

Let $x_1\in M_1$ be arbitrary. We consider the vertical trace of $F_\varepsilon$ above $x_1$, i.e.,
$$
(F_\varepsilon)_{x_1} = \{ x_2\in M_2\ \mid\ (x_1,x_2)\in F_\varepsilon \}.
$$
By the above property satisfied by $\gamma(\cdot)$, there exist (at most) $N=[T/\eta]+1$ points $x_2^k=x_2^k(x_1)$ (depending on $x_1$) in $M_2$, with $k=1,\ldots,N$, such that
\begin{equation}\label{property_neighb}
\{x_1\}\times \left(M_2\setminus \bigcup_{k=1}^N B_2(x_2^k,\varepsilon)\right)\subset (F_\varepsilon)_{x_1}  \qquad \forall \varepsilon\in (0,\eta)
\end{equation}
where $B_2(x_2^k,\varepsilon)$ is the open ball in $M_2$ of center $x_2^k$ and of radius $\varepsilon$ (see Figure \ref{fig1}), for some distance on $M_2$ inducing its topology, and moreover $N$ does not depend on $x_1\in M_1$ (actually, we can take $N=1$ if $T$ is small enough).
\begin{figure}[h]
\begin{center}
\resizebox{8.5cm}{!}{\input 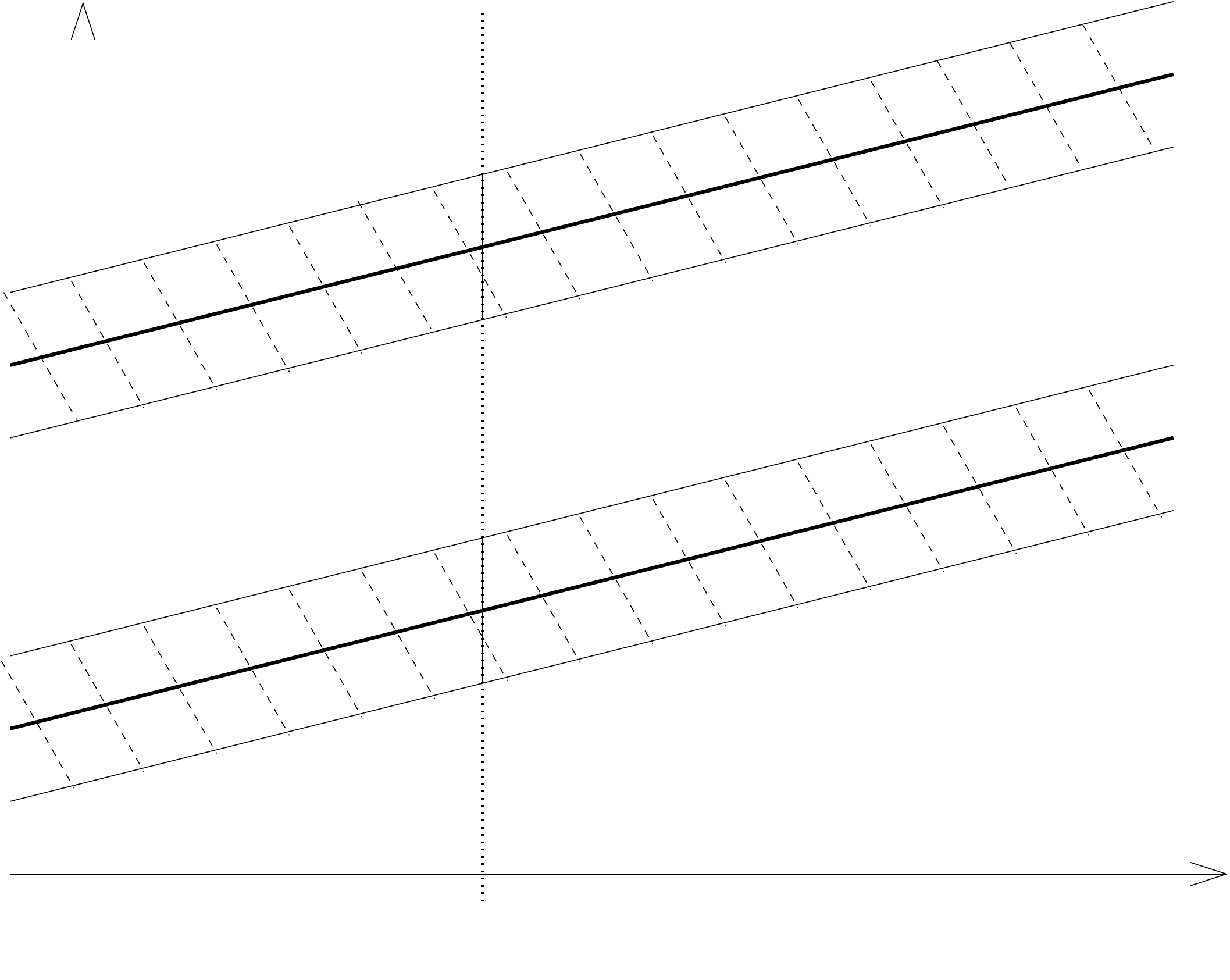_t}
\end{center}
\caption{Illustration of \eqref{property_neighb}.}\label{fig1}
\end{figure}
Using \ref{MM} and Proposition~\ref{main2}, we have
\begin{equation}\label{13:03}
\g_M(F_\varepsilon)\geq \g_{M_1}\left(x_1\mapsto\g_{M_2}((F_\varepsilon)_{x_1})\right) \geq \g_{M_1}\left( \theta_\varepsilon \right) \qquad \forall \varepsilon\in (0,\eta)
\end{equation}
where
$$
\theta_\varepsilon(x_1)=\g_{M_2}\left( M_2\setminus \bigcup_{k=1}^NB_2(x_2^k,\varepsilon) \right) \qquad \forall x_1\in M_1 .
$$
Note that $\theta_\varepsilon(x_1)\in[0,1]$. We claim that
\begin{equation}\label{m0734}
\lim_{\varepsilon\rightarrow 0} \,\inf_{x_1\in M_1}\theta_\varepsilon(x_1)=1.
\end{equation}
Admitting \eqref{m0734} temporarily, it follows that for every $k\in \N^*$ there exists $\varepsilon_k>0$ such that $\theta_\varepsilon(x_1)\geq 1-1/k$ for every $x_1\in M_1$ and for every $\varepsilon\in[0,\varepsilon_k]$, and thus, using \eqref{13:03}, $\g_M(F_\varepsilon)\rightarrow 1$ as $\varepsilon\rightarrow 0$.
We now conclude like at the end of Section \ref{sec_proof_ii}: 
since $F_\varepsilon$ is closed, we have $\g_M(F_\varepsilon)\leq \g'_M(F_\varepsilon)$ by Lemma \ref{lemggprime} and thus $\g'_M(F_\varepsilon)\rightarrow 1$ as $\varepsilon\rightarrow 0$. Hence, for every $\mu\in\QL(M)$, we have $\mu(F_\varepsilon)\rightarrow 1$ and thus $\mu(\omega_\varepsilon)\rightarrow 0$ as $\varepsilon\rightarrow 0$.
Since $\gamma(\R)=\cap_{\varepsilon>0}\omega_\varepsilon$, we conclude that $\mu(\gamma(\R))=0$.

It remains to prove \eqref{m0734}. By contradiction, if \eqref{m0734} is not true, there exist $\alpha_0\in (0,1)$, a sequence of positive real numbers $\varepsilon_j$ converging to $0$ and a sequence of points $x_1^j\in M_1$ (with $j\in\N$) such that $\theta_{\varepsilon_j}(x_1^j) \leq 1-\alpha_0$ for every $j\in\N$.
Hence, for every $j\in\N$ there exist $N$ points $x_2^{k,j}=x_2^k(x_1^j)\in M_2$, with $k=1,\ldots,N$, and an eigenfunction $\phi_{\varepsilon_j}$ of $\triangle_2$, of norm $1$ in $L^2(M_2,\rho_2)$, such that
$$
\int_{M_2\setminus \bigcup_{k=1}^N B_2(x_2^{k,j},\varepsilon_j)}\vert \phi_{\varepsilon_j}\vert^2\, d\rho_2\leq 1-\frac{\alpha_0}{2} .
$$
By compactness, taking a subsequence if necessary, there exists $x_1\in M_1$ such that $x_1^j$ converges to $x_1$ in $M_1$, and there exist points $x_2^{k,\infty}\in M_2$ such that $x_2^{k,j}$ converges to $x_2^{k,\infty}$ as $j\rightarrow +\infty$, for $k=1,\ldots,N$. Then, there exists $\alpha_1>0$ such that, for every $\varepsilon>0$,
$$
\int_{\bigcup_{k=1}^N \overline{B}_2(x_2^{k,\infty},\varepsilon)}\vert \phi_{\varepsilon_j}\vert^2\, d\rho_2\geq \alpha_1
$$
for every $j$ large enough (so that, at least, $\varepsilon_j<\varepsilon$).
Here, $ \overline{B}_2(x_2^{k,\infty},\varepsilon)$ is the closed geodesic ball of center $x_2^{k,\infty}$ and of radius $\varepsilon$.
Taking another subsequence if necessary, there exists $\mu_2\in \mathcal{Q}(M_2)$ such that the sequence of probability measures $\vert \phi_{\varepsilon_j}\vert^2\, \rho_2$ converges weakly to $\mu_2$ as $j\rightarrow +\infty$. Since $\bigcup_{k=1}^N \overline{B}_2(x_2^{k,\infty},\varepsilon)$ is closed, using the Portmanteau theorem (see Appendix \ref{cintre}), we infer that $\mu_2(\bigcup_{k=1}^N \overline{B}_2(x_2^{k,\infty},\varepsilon)) \geq \alpha_1$. Since this inequality is valid for any $\varepsilon>0$, we obtain that $\sum_{k=1}^N\mu_2 (\{x_2^{k,\infty}\})\geq \alpha_1$. 

Hence there exists a point $x_2\in M_2$ (which is one of the points $x_2^{k,\infty}$) such that $\mu_2(\{x_2\})>0$.
The probability measure $\mu_2\in\QL(M_2)$ is the image under the canonical projection $\pi_2:S^*M_2\rightarrow M_2$ of a microlocal QL $\tilde\mu_2$ which is a probability Radon measure on $S^*M_2$. As a consequence of the Egorov theorem, $\tilde\mu_2$ is invariant under the Hamiltonian flow on $S^*M_2$ (see, e.g., \cite{Zworski}). The propagation property, along with the fact that $\mu(\{x_2\})>0$, implies that $\mu_2([0,T])=+\infty$, which raises a contradiction with the fact that $\mu_2$ is a probability measure. Therefore, \eqref{m0734} is proved.
\end{proof}

To conclude the proof of Item \ref{itemiv}, Lemma \ref{lemT} implies that $\tilde\mu(\tilde\gamma([0,T]))=0$ for every $T>0$ and every Hamiltonian curve $\tilde\gamma$ on $\{h=1\}$, not necessarily periodic. When $\tilde\gamma$ is not periodic this fact is obvious since the probability measure $\tilde\mu$ is invariant under the Hamiltonian flow and has finite mass. Therefore this is only for periodic Hamiltonian curves that the result is interesting, and this is the contents of Item \ref{itemiv}.

\medskip

In the case of Riemannian manifolds with boundary (statement done in Remark \ref{rem_boundary}), the assumption of Dirichlet or Neumann boundary conditions ensures the validity of the invariance property of QLs under the generalized geodesic flow, even when reflecting at the boundary (see \cite{lebeau2} for the Dirichlet case and see \cite[Footnote 3 page 996]{LLTP} for the Neumann case). 
If it happens that $\gamma$ (or a part of it) is entirely contained in the boundary of $M$, for instance if $\gamma_1$ is in $
M_1$ and $\gamma_2$ entirely lies in $\partial M_2$, then Lemma \ref{lemT} is straightforwardly extended.
Finally, in the particular case where the curve $\gamma=\pi\circ\tilde\gamma$ hits \emph{simultaneously} the boundaries of $M_1$ and $M_2$, it reflects along itself and thus uniqueness is not lost, which ensures the well-posedness of the Melrose-Sj\"ostrand bicharacteristic flow (for more details, see \cite[Remark 1.9]{LLTP}).
The conclusion follows.

\appendix

\section{Appendix}

\subsection{Portmanteau theorem} \label{cintre}
Let us recall the so-called Portmanteau theorem (see, e.g., \cite{Billingsley}).
Let $X$ be a metric space, endowed with its Borel $\sigma$-algebra. 
Let $\mu$ and $\mu_n$, $n\in\N^*$, be finite Borel measures on $X$. Then the following items are equivalent:
\begin{itemize}
\item $\mu_n\rightarrow\mu$ for the narrow topology, i.e., $\int_X f\, d\mu_n\rightarrow\int_X f\, d\mu$ for every bounded continuous function $f$ on $X$;
\item $\int_X f\, d\mu_n\rightarrow\int_X f\, d\mu$ for every Borel bounded function $f$ on $X$ such that $\mu(\Delta_f)=0$, where $\Delta_f$ is the set of points at which $f$ is not continuous;
\item $\mu_n(B)\rightarrow\mu(B)$ for every Borel subset $B$ of $X$ such that $\mu(\partial B)=0$;
\item $\mu(F)\geq\limsup\mu_n(F)$ for every closed subset $F$ of $X$, and $\mu_n(X)\rightarrow\mu(X)$;
\item $\mu(O)\leq\liminf\mu_n(O)$ for every open subset $O$ of $X$, and $\mu_n(X)\rightarrow\mu(X)$.
\end{itemize}

\subsection{Some facts on invariant measures in the Riemannian case}\label{app:measures}
Let $(M,g)$ be a smooth compact Riemannian manifold (not necessarily a product), endowed with the normalized canonical measure $\rho$. Let $\triangle$ be the Laplace-Beltrami operator.
Its principal symbol is $h=g^*$, the co-metric on $T^*M$. The co-sphere bundle is canonically identified with the submanifold $\{h=1\}$ (the characteristic manifold is trivial).

The main reason why we restrict ourselves to the Riemannian case in this section is that, in this case, we have the following property for $h$:
\begin{equation}\label{spec_Riema}
\xi_1\neq\xi_2 \Rightarrow \frac{\partial h}{\partial\xi}(x,\xi_1)\neq \frac{\partial h}{\partial\xi}(x,\xi_2) 
\end{equation}
for every $x\in M$ and for all $\xi_1,\xi_2\in T^*_xM$, in local coordinates.

We recall that, given a periodic geodesic $\tilde\gamma$ on $S^*M$, the Dirac measure $\delta_{\tilde\gamma}$ on $S^*M$  is defined by $\delta_{\tilde\gamma}(a)=\frac{1}{T}\int_0^Ta(\tilde\gamma(t))\, dt$, for every $a\in C^0(S^*M)$, where $T$ is the period of $\gamma$.
Setting $\gamma=\pi\circ\tilde\gamma$ where $\pi:S^*M\rightarrow M$ is the canonical projection, the Dirac measure $\delta_\gamma$ on $M$ is defined by $\delta_\gamma(f)=\frac{1}{T}\int_0^Tf(\gamma(t))\, dt$, for every $f\in C^0(M)$. 
Obviously, we have $\pi_*\delta_{\tilde\gamma}=\delta_\gamma$.

Before stating the next result, we recall a useful fact. Let $\Phi:X\rightarrow Y$ be a measurable mapping, with $X$ and $Y$ separable metric spaces. Let $\mu$ be a Radon measure on $X$ and let $\Phi_*\mu$ be its image under $\Phi$. 
We recall that the support of $\mu$ is the closed subset $\supp(\mu)$ of $X$ defined as the set of all $x\in X$ such that $\mu(U)>0$ for any neighborhood $U$ of $x$. If $\Phi$ is continuous and proper then $\Phi(\supp(\mu))=\supp(\Phi_*\mu)$.

Hereafter, we establish a decomposition of invariant probability Radon measures on $S^*M$ with respect to any Dirac measure along a periodic geodesic. The invariance is understood with respect to the geodesic flow.
Note that, by propagation, a finite invariant Radon measure can involve a Dirac part only if this part is a Dirac $\delta_{\tilde\gamma}$ along a geodesic and moreover the geodesic $\tilde\gamma$ must be periodic (due to finiteness of the measure). Denote by $\mathcal{I}(S^*M)$ the set of invariant probability Radon measures on $S^*M$.

\begin{proposition} \label{propQLM}
Let $\tilde\mu \in \mathcal{I}(S^*M)$ and let $\mu=\pi_*\tilde\mu$.
Let $\tilde\gamma$ be a periodic geodesic on $S^*M$ and let $\gamma=\pi\circ\tilde\gamma$. Then:
\begin{itemize}
\item $\tilde\mu=\delta_{\tilde\gamma}$ if and only if $\mu = \delta_\gamma$. 
\item There exists a nonnegative Radon measure $\tilde\mu_1$ on $S^*M$ that is invariant under the geodesic flow, such that $\tilde\mu=a \delta_{\tilde\gamma}+\tilde\mu_1$, with $a=\tilde\mu(\tilde\gamma(\R))$ and $\tilde\mu_1(\tilde\gamma(\R))=0$.
Moreover, setting $\mu_1=\pi_*\tilde\mu_1$, we have $\mu =a\delta_\gamma+ \mu_1$ with $a=\mu(\gamma(\R))$ and $\mu_1(\gamma(\R)) =0$.
\end{itemize}
\end{proposition}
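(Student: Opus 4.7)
The plan is to establish the second bullet first and derive the first from it. The central idea is to split $\tilde\mu$ along the closed flow-invariant subset $\tilde\gamma(\R)\subset S^*M$ and to use uniqueness of the invariant probability measure on a single periodic orbit.

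First I would set $a := \tilde\mu(\tilde\gamma(\R))$ and define the nonnegative Borel measures $\tilde\nu_0(B):=\tilde\mu(B\cap\tilde\gamma(\R))$ and $\tilde\mu_1(B):=\tilde\mu(B\setminus\tilde\gamma(\R))$ for every Borel $B\subset S^*M$, so that $\tilde\mu=\tilde\nu_0+\tilde\mu_1$. Since $\tilde\gamma(\R)$ is flow-invariant and $\tilde\mu$ is flow-invariant, both summands are nonnegative flow-invariant Radon measures. The cogeodesic flow restricted to $\tilde\gamma(\R)$ is conjugate to a free rotation on a circle; hence the only flow-invariant finite Borel measures carried by $\tilde\gamma(\R)$ are constant multiples of $\delta_{\tilde\gamma}$, so $\tilde\nu_0=a\,\delta_{\tilde\gamma}$, and by construction $\tilde\mu_1(\tilde\gamma(\R))=0$. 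Pushing forward by $\pi$ gives $\mu=a\,\delta_\gamma+\mu_1$ with $\mu_1:=\pi_*\tilde\mu_1$. The last point to check is that $\mu_1(\gamma(\R))=0$ (which then forces $a=\mu(\gamma(\R))$); this is precisely Lemma~\ref{lemraynul} applied to the flow-invariant measure $\tilde\mu_1$.

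The first bullet follows at once. The direction $\tilde\mu=\delta_{\tilde\gamma}\Rightarrow\mu=\delta_\gamma$ is the immediate change of variable $\pi_*\delta_{\tilde\gamma}=\delta_\gamma$. Conversely, if $\mu=\delta_\gamma$, the decomposition just obtained gives $\delta_\gamma=a\,\delta_\gamma+\mu_1$ with $\mu_1\geq 0$ and $\mu_1(\gamma(\R))=0$; evaluating on $\gamma(\R)$ yields $a=1$, so $\tilde\mu_1$ is a nonnegative measure with total mass $\tilde\mu(S^*M)-a=0$, whence $\tilde\mu_1=0$ and $\tilde\mu=\delta_{\tilde\gamma}$.

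The hard part is the application of Lemma~\ref{lemraynul}: showing that $\tilde\mu_1(\tilde\gamma(\R))=0$ implies $\tilde\mu_1(\pi^{-1}(\gamma(\R)))=0$. The subtlety is that $\pi^{-1}(\gamma(\R))$ contains, besides $\tilde\gamma(\R)$, the reverse lift $\tilde\gamma^{-1}(\R)$ as well as many covectors whose cogeodesics only cross $\pi^{-1}(\gamma(\R))$ transversally. The Riemannian property \eqref{spec_Riema}, which reconstructs a covector from its projected velocity, ensures that the only orbits entirely contained in $\pi^{-1}(\gamma(\R))$ are $\tilde\gamma$ and $\tilde\gamma^{-1}$; a Fubini-type argument along the flow then rules out mass on the transversal remainder, and handling the reverse-lift contribution (via an invariance argument on the finitely many orbits above $\gamma(\R)$) completes the proof.
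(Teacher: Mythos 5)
Your overall architecture is sound and close to the paper's: the second bullet is obtained there too by setting $a=\tilde\mu(\tilde\gamma(\R))$, subtracting $a\delta_{\tilde\gamma}$, and invoking Lemma~\ref{lemraynul} to get $\mu_1(\gamma(\R))=0$. Two of your choices are actually improvements. First, you justify that the restriction of $\tilde\mu$ to $\tilde\gamma(\R)$ equals $a\delta_{\tilde\gamma}$ by unique ergodicity of the flow on a single periodic orbit; the paper asserts the identity $\tilde\mu_1(B)=\tilde\mu(B\setminus\tilde\gamma(\R))$ ``by definition of $\tilde\mu_1$'', which silently uses exactly this fact. Second, you deduce the first bullet from the second in three lines, whereas the paper gives a separate direct argument on $\supp(\tilde\mu)$; your route is shorter and, granting the second bullet, correct.

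The genuine gap is the step you defer to ``an invariance argument on the finitely many orbits above $\gamma(\R)$''. No such argument can exist: the time-reversed lift $\tilde\gamma^{-1}$ (the orbit $t\mapsto(\gamma(-t),-\xi(-t))$, which is a bona fide periodic orbit of the geodesic flow since $h(x,-\xi)=h(x,\xi)$) carries the invariant probability measure $\delta_{\tilde\gamma^{-1}}$, which is supported in $\pi^{-1}(\gamma(\R))\setminus\tilde\gamma(\R)$. Taking $\tilde\mu=\delta_{\tilde\gamma^{-1}}$ gives $a=0$, $\tilde\mu_1=\tilde\mu$ and $\mu_1(\gamma(\R))=1$, so the conclusion you are trying to reach fails for that measure; it also gives $\mu=\delta_\gamma$ with $\tilde\mu\neq\delta_{\tilde\gamma}$, contradicting the first bullet as literally stated. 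You correctly isolated the transversal covectors (killed by the flow-box argument of Lemma~\ref{lem_geomR}, since \eqref{spec_Riema} makes $\vec h$ transverse to the fiber bundle whenever $\frac{\partial h}{\partial\xi}(x,\xi_2)$ is not \emph{collinear} with $\dot\gamma$), but at $\xi_2=-\xi_1$ the velocity is antiparallel to $\dot\gamma$, hence $\vec h$ is \emph{tangent} to $\pi^{-1}(\gamma(\R))$ there and the transversality claim of Lemma~\ref{lem_geomR} --- and with it Lemma~\ref{lemraynul} --- breaks down. So the difficulty you flagged is real and is not resolved by the paper either (its proof infers transversality from the mere inequality in \eqref{spec_Riema}, which does not exclude antiparallel vectors). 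The statement, and both proofs, become correct only if $\tilde\gamma(\R)$ and $\delta_{\tilde\gamma}$ are understood to comprise both orientations of the closed geodesic (equivalently, if one works on the quotient of $S^*M$ by $\xi\mapsto-\xi$); you should either make that convention explicit or restrict to non-reversible geodesics and treat $\tilde\gamma^{-1}$ as a second atom in the decomposition.
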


\begin{proof}[Proof of Proposition \ref{propQLM}.]
%
Let us prove the first point.
If $\tilde\mu=\delta_{\tilde\gamma}$, then $\mu=\pi_*\delta_{\tilde\gamma}=\delta_\gamma$. 
Conversely, if $\mu=\pi_*\tilde\mu = \delta_\gamma=\pi_*\delta_{\tilde\gamma}$, then $\supp(\tilde\mu)\subset\pi^{-1}(\gamma(\R))$. Let us prove that we have exactly $\supp(\tilde\mu)=\tilde\gamma(\R)$. 
Since $\tilde\mu$ is invariant under the geodesic flow $\varphi_t$ (meaning that $\varphi_t^*\tilde\mu=\tilde\mu$ for any $t\in\R$), we must have $\varphi_t(\supp(\tilde\mu))=\supp(\tilde\mu)\subset\pi^{-1}(\gamma(\R))$ for any $t\in\R$.
Given any $z\in\supp(\tilde\mu)$, we must have $\pi(z)\in\pi(\supp(\tilde\mu))=\supp(\pi_*\tilde\mu)=\gamma(\R)$, and therefore there exists $s_0\in\R$ such that $\pi(z)=\gamma(s_0)$. Since $\varphi_t(z)\in\pi^{-1}(\gamma(\R))$ for any $t\in\R$, we must have $\pi(\varphi_t(z))=\gamma(s(t))$ for some $s(t)$, with $s(0)=s_0$. The function $t\mapsto s(t)$ must be strictly monotone around $s_0$, and since $\gamma$ cannot have two distinct lifts in $S^*M$, it follows that $z\in\tilde\gamma$ and $\varphi_t(z)=\tilde\gamma(s_0+t)$. The first point is proved.

Let us prove the second point.
We denote by $\vec{h}$ the geodesic Hamiltonian field, i.e., the Hamiltonian vector field on $T^*M$ associated with the Hamiltonian $h$.

\begin{lemma}\label{lem_geomR}
Given any geodesic $\gamma=\pi\circ\tilde\gamma$ and any $T>0$, the vector field $\vec{h}$ is transverse to $\pi^{-1}(\gamma((0,T)))\setminus\tilde\gamma((0,T))$.
\end{lemma}

Note that $\pi^{-1}(\gamma((0,T)))$ is a fiber bundle with base the one-dimensional manifold $\gamma((0,T))$.

\begin{proof}[Proof of Lemma \ref{lem_geomR}.]
Taking an arbitrary point $x=\pi(x,\xi_1)\in\gamma((0,T))$, the tangent space to $\pi^{-1}(\gamma((0,T)))$ at any point $(x,\xi)$ of the vertical fiber above $x$ does not depend on $\xi$ and is spanned by all $(\frac{\partial h}{\partial\xi}(x,\xi_1),*)$. 
Thanks to \eqref{spec_Riema}, $\vec{h}$ is transverse to $\pi^{-1}(\gamma((0,T)))$ at $(x,\xi_2)$ (see Figure \ref{fig_geod}). 
\end{proof}

\begin{figure}[h]
\centerline{\scalebox{0.37}{\input{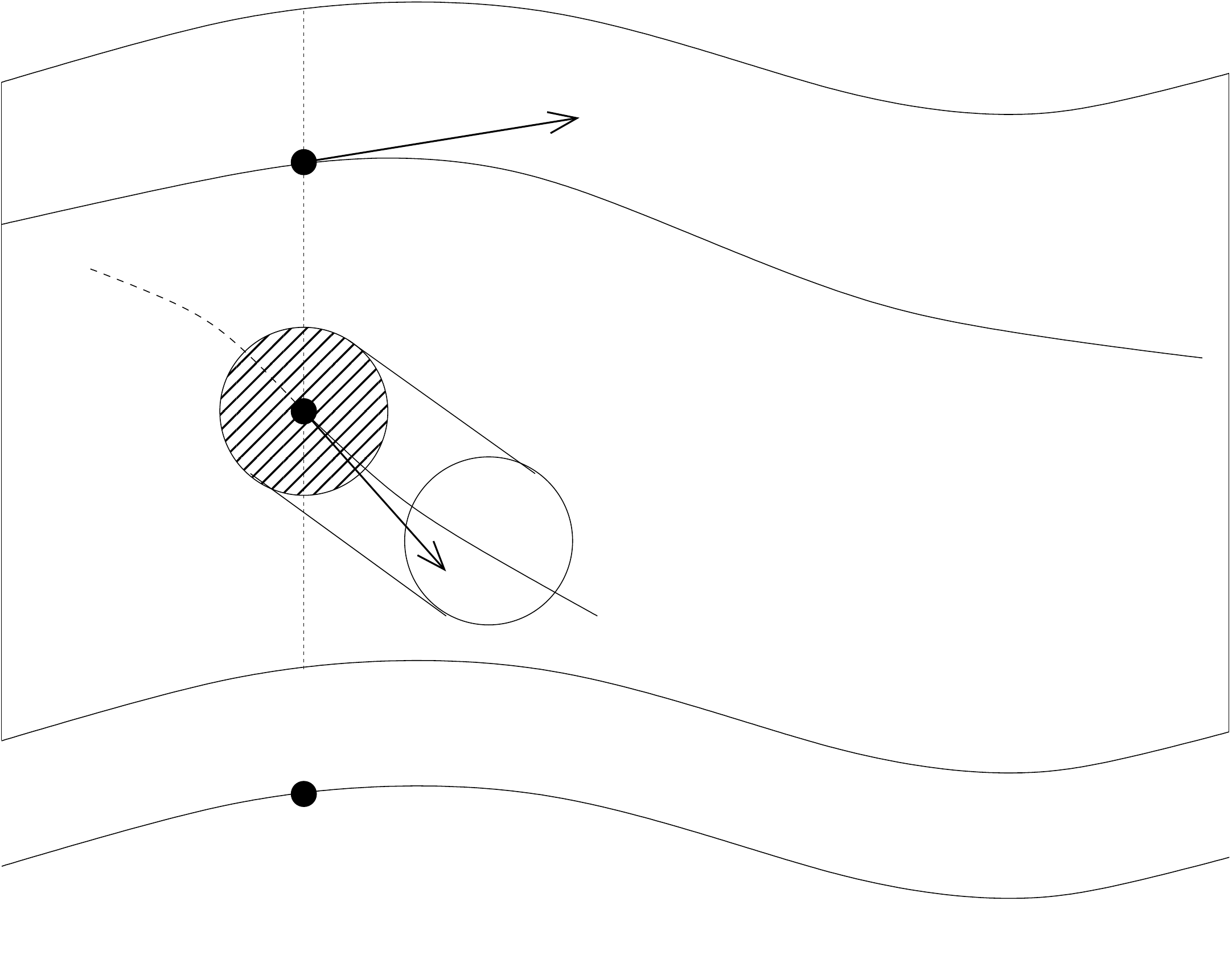_t}}}
\caption{Illustration of Lemmas \ref{lem_geomR} and \ref{lemraynul}.}
\label{fig_geod}
\end{figure}

From Lemma \ref{lem_geomR}, we deduce another general lemma.

\begin{lemma}\label{lemraynul}
Let $\tilde\mu$ be a nonnegative finite invariant Radon measure on $S^*M$. Given any geodesic $\gamma=\pi\circ\tilde\gamma$ and any $T>0$, we have $\tilde\mu(\pi^{-1}(\gamma((0,T)))\setminus\tilde\gamma((0,T)))=0$.
\end{lemma}

\begin{proof}[Proof of Lemma \ref{lemraynul}.]
We argue by contradiction. If $\tilde\mu(\pi^{-1}(\gamma((0,T)))\setminus\tilde\gamma((0,T)))>0$, then there exist $z\in\pi^{-1}(\gamma((0,T)))\setminus\tilde\gamma((0,T))$ and a neighborhood $U$ of $z$ in the manifold $\pi^{-1}(\gamma(\R))$ such that $\tilde\mu(U)>0$. Let us propagate $U$ under the geodesic flow $\varphi_t$. By Lemma \ref{lem_geomR}, the vector field $\vec {h}$ is transverse to $\pi^{-1}(\gamma((0,T)))$. Hence if $U$ and $t>0$ are sufficiently small then $\varphi_t(U) \cap \pi^{-1}(\gamma((0,T))) = \emptyset$, and actually the union of all $\varphi_s(U)$ with $0\leq s\leq t$ is a cylinder (denoted by $\mathcal{C}$) with distinct layers $\varphi_s(U)$ (see Figure \ref{fig_geod}). 

Now, since $\tilde\mu$ is invariant under the geodesic flow, we have $\tilde\mu(\varphi_s(U))=\tilde\mu(U)>0$. It follows that $\tilde\mu(\mathcal{C})=+\infty$, which contradicts the fact that $\tilde\mu$ is finite.
\end{proof}

We are now in a position to prove the second point. We set $a=\tilde\mu(\tilde\gamma(\R))$ and $\tilde\mu_1=\tilde\mu-a\delta_{\tilde\gamma}$. Given any measurable subset $B$ of $S^*M$, by definition of $\tilde\mu_1$ we have $\tilde\mu_1(B)=\tilde\mu_1(B\setminus\tilde\gamma(\R))+\tilde\mu_1(\tilde\gamma(\R))=\tilde\mu(B\setminus\tilde\gamma(\R))$, from which we infer that $\tilde\mu_1$ is a nonnegative measure that is invariant under the geodesic flow.
Note that $\tilde\mu_1(\tilde\gamma(\R))=0$.

Let us prove that $(\pi_*\tilde\mu_1)(\gamma(\R))=0$. This follows from Lemma \ref{lemraynul}, by writing that $\pi_*\tilde\mu_1=\pi_*\tilde\mu-a\delta_\gamma$ (because $\pi_*\delta_{\tilde\gamma} = \delta_\gamma$) and
$$
(\pi_*\tilde\mu_1)(\gamma(\R)) = (\pi_*\tilde\mu)(\gamma(\R)) - a
= \tilde\mu(\pi^{-1}(\gamma(\R))) - \tilde\mu(\tilde\gamma(\R))
= \tilde\mu(\pi^{-1}(\gamma(\R))\setminus\tilde\gamma(\R))=0.
$$
The proposition is proved.
\end{proof}

\subsection{Proof of Proposition~\ref{dilatation}}\label{proof:propdilat}
Let $A$ be the set of all possible $s>0$ such that the spectrum of $\triangle_1 + \triangle_{2,s}$ does not satisfy \ref{MM}.
If $A$ is empty, there is nothing to do. Then, let us assume that $A$ is nonempty. 

Given any $s\in A$, using that \ref{MM} is equivalent to \eqref{simplicity2}, there exist two distinct pairs $(i,j),(i',j')\in (\N^*)^2$ such that $\lambda^1_i+s^\alpha \lambda^2_j=\lambda^1_{i'}+s^\alpha \lambda^2_{j'}$, hence necessarily $\lambda^1_i\neq\lambda^1_{i'}$ and $\lambda^2_j\neq\lambda^2_{j'}$, and we set $F(s)=(\lambda^1_i,\lambda^2_j, \lambda^1_{i'},\lambda^2_{j'})$. By the axiom of choice, we have thus constructed a map $F:A\rightarrow \R^4$, whose range is at most countable.

We claim that $F$ is injective. Indeed, assume that $F(s)=(\lambda^1_i,\lambda^2_j, \lambda^1_{i'},\lambda^2_{j'})=F(s')$ with $(s,s')\in A^2$. Then $\lambda^1_i+s^\alpha \lambda^2_j=\lambda^1_{i'}+s^\alpha \lambda^2_{j'}$ and $\lambda^1_i+(s')^\alpha \lambda^2_j=\lambda^1_{i'}+(s')^\alpha \lambda^2_{j'}$. Hence $\lambda^1_i-\lambda^1_{i'} = s^\alpha(\lambda^2_{j'}-\lambda^2_j) = (s')^\alpha(\lambda^2_{j'}-\lambda^2_j)$, and since $\lambda^2_j\neq \lambda^2_{j'}$ we infer that $s=s'$.

Since the range of $F$ is at most countable, we conclude that $A$ is at most countable.

\subsection{Proof of Lemma~\ref{lem_square}}\label{proof:lem_square}
In \cite{jaffard}, it is shown how to apply three results of \cite{Kahane} to the sequence of points of $\R^3$ given by $\Lambda = \left\{ (j,k,j^2+k^2)\ \mid\ j,k\in\Z \right\}$ in order to prove that, given any open subset $\omega$ of $(0,\pi)^2$, given any $T>0$, there exists $C_T(\omega)>0$ such that
\begin{equation}\label{ineg_jaffard}
\int_0^T\int_\omega \bigg\vert \sum_{j,k\in\Z} a_{jk} \sin(jx)\sin(ky)e^{i(j^2+k^2)t}\bigg\vert^2 \, dx\, dy\, dt   \geq   C_T(\omega)  \sum_{j,k\in\Z} \vert a_{jk}\vert^2
\end{equation}
for every $(a_{jk})_{j,k\in\Z}\in \ell^2(\C)$.
The consequence of \eqref{ineg_jaffard} that is given in \cite{jaffard} is the observability (and thus the controllability) property for plate and Schr\"odinger equations in the square, for any open subset $\omega$ and for any $T>0$ (without geometric control condition).

Here, we give another consequence of \eqref{ineg_jaffard}.
Given any integer $\lambda\in(\N^*)^2+(\N^*)^2$ that is the sum of two squares of positive integers, we define the set
$$
E_\lambda = \{(j,k)\in(\N^*)^2 \ \mid\ \lambda=j^2+k^2 \} 
$$
Then, as a consequence of \eqref{ineg_jaffard}, we have
$$
\int_\omega \bigg\vert \sum_{(j,k)\in E_\lambda} a_{jk} \sin(jx)\sin(ky)\bigg\vert^2 \, dx\, dy   \geq   \frac{C_T(\omega)}{T}  \sum_{(j,k)\in E_\lambda} \vert a_{jk}\vert^2
$$
for every $(a_{jk})_{(j,k)\in E_\lambda}\in \ell^2(\C)$.
Then, \eqref{conj_grebenkov} follows.

\paragraph{Acknowledgment.} 
We warmly thank Yves Colin de Verdi\`ere for many fruitful discussions.
The first author is supported by the project THESPEGE (APR IA), R\'egion Centre-Val de Loire, France, 2018-2020.

\end{document}